\documentclass[12pt]{amsart}
\usepackage{amssymb,amsthm,mathrsfs,enumitem}

\newcommand{\height}{\operatorname{ht}}
\newcommand{\ara}{\operatorname{ara}}
\newcommand{\Spec}{\operatorname{Spec}}
\renewcommand{\phi}{\varphi}

\newcommand{\Ima}{\operatorname{Im}}
\newcommand{\Max}{\operatorname{Max}}
\newcommand{\Min}{\operatorname{Min}}

\newcommand{\Gold}{\operatorname{Gold}}

\newtheorem{proposition}{Proposition}[section]

\newtheorem{corollary}[proposition]{Corollary}
\newtheorem{theorem}[proposition]{Theorem}

\theoremstyle{definition}

\newtheorem{example}[proposition]{Example}
\newtheorem{remark}[proposition]{Remark}

%%%%%%%%%%%%%%%%%%% document %%%%%%%%%%%%%%%%%%%%%%%%%%%%%%%%%%%%%%%%%%

\begin{document}

\setlist[enumerate]{label=({\bfseries\roman*}), font=\normalfont}

\title{Avoidance and absorbance}

\author[A. Tarizadeh and J. Chen]{Abolfazl Tarizadeh and Justin Chen}
\address{Department of Mathematics, Faculty of Basic Sciences, University of Maragheh 
P. O. Box 55136-553, Maragheh, Iran.}
\email{ebulfez1978@gmail.com}
\address{School of Mathematics, Georgia Institute of Technology, Atlanta, Georgia, 30332 U.S.A.}
\email{justin.chen@math.gatech.edu}

\date{}
\subjclass[2010]{13A15, 13C05, 13E99}
\keywords{Prime avoidance, Prime absorbance, C.P. rings}

\begin{abstract}
We study the two dual notions of prime avoidance and prime absorbance.
We generalize the classical prime avoidance lemma to radical ideals.
A number of new criteria are provided for an abstract ring to be C.P. (every set of primes satisfies avoidance) or P.Z. (every set of primes satisfies absorbance).
Special consideration is given to the interaction with chain conditions and Noetherian-like properties.
It is shown that a ring is both C.P. and P.Z. iff it has finite spectrum.
\end{abstract}

\maketitle

\section{Introduction}

The prime avoidance lemma is one of the most fundamental results in commutative algebra: if an ideal is contained in a finite union of prime ideals, then it is already contained in one of them.
The set-theoretic dual result -- referred to as \emph{prime absorbance} -- is also useful (and follows directly from the definition of primeness): if a finite intersection of ideals is contained in a prime ideal, then one of them is already contained in the prime.
However, both results fail for infinite families in general.
For example, infinite prime avoidance already fails in the ring $k[x,y]$ (cf. Example \ref{ex:avoidance}(5)), and infinite prime absorbance fails in the ring of integers $\mathbb{Z}$. 

With this in mind, the main goal of this paper is to study the dual notions of prime avoidance and prime absorbance, especially in the infinite case.
Infinite prime avoidance has been periodically investigated over the years, see e.g. \cite{Chen}, \cite{Pakala-Shores}, \cite{Ries-Vis}, \cite{Sharp-Vamos}, and \cite{Smith}.
Dually, infinite prime absorbance has been studied in \cite[\S V]{Picavet} and \cite[\S4]{Tarizadeh}.
In \cite{Karamzadeh}, the prime avoidance lemma is also proven for non-commutative rings. 

Contrary to what one might initially expect, avoidance is not strictly limited to prime ideals.
In Section 2, we formulate the avoidance property in general and show that it passes to intersections in a certain specific sense, cf. Theorem \ref{thm:intersectionAvoidance}.
This allows us to generalize the classical prime avoidance lemma to radical ideals, cf. Theorem \ref{thm:radical_avoidance} and Corollary \ref{Corollary E.D. radical avoid}. 

Section 3 investigates the rings in which every set of primes has the avoidance property, the so-called compactly packed (or C.P.) rings.
Dually, Section 4 investigates the rings in which every set of primes has the absorbance property, which we name a properly zipped (or P.Z.) ring. Although they have received less attention in the literature, P.Z. rings admit a number of interesting and natural characterizations, e.g. a ring is P.Z. iff any union of Zariski-closed sets is Zariski-closed. 

A recurring theme is the interplay of the C.P. and P.Z. properties with chain conditions and Noetherian-like properties.
For instance, it is shown that P.Z. rings are semilocal, and satisfy d.c.c. on both prime ideals and finitely generated radical ideals.
Theorem \ref{Thm c.p.+P.Z.=finite primes} characterizes the rings which are both C.P. and P.Z. as the rings with only finitely many prime ideals.
The flat topology on spectra of C.P. and P.Z. rings is also investigated, see Propositions \ref{Prop flat c.p.} and \ref{prop:flat P.Z.}.
The P.Z. rings of dimension $1$ are characterized in Theorem \ref{thm:1dimPZ}.
Finally, Section 5 concludes with various examples. 

In this paper, all rings are commutative with $1 \ne 0$.
The nilradical is denoted by $\mathfrak{N}$.
For a ring $R$, there is a (unique) topology, called the flat topology, on $\Spec(R)$ for which the collection of $V(I)$, where $I$ is a finitely generated ideal of $R$, forms a base of open sets.
If $\mathfrak{p}$ is a prime ideal of $R$, then $\Lambda(\mathfrak{p}) := \{\mathfrak{q} \in \Spec(R) : \mathfrak{q} \subseteq \mathfrak{p} \}$ is the flat closure of the point $\mathfrak{p} \in \Spec(R)$.
For more information see e.g. \cite{Tarizadeh}.

\section{General avoidance}

Let $R$ be a ring, and $\mathcal{S}$ a set of ideals of $R$.
We say that $\mathcal{S}$ \emph{satisfies avoidance} if for any ideal $J$ of $R$, whenever $\displaystyle J \subseteq\bigcup_{I \in S} I$, then $J \subseteq I$ for some $I \in S$. 

\begin{example} \label{ex:avoidance}
We illustrate the avoidance property with some basic examples: 
\begin{enumerate}[label=(\arabic*), leftmargin=*]
\item Any set of $\le 2$ ideals satisfies avoidance: if an ideal is contained in a union of $2$ ideals, then it is contained in one of them. 
\item Any finite set of prime ideals satisfies avoidance: this is the classical prime avoidance lemma. 
\item If $R$ contains an infinite field $k$, then any finite set of ideals satisfies avoidance: no $k$-vector space is a finite union of proper subspaces. 
\item The set of maximal ideals $\Max(R)$ satisfies avoidance: if an ideal consists of nonunits, then it is contained in a maximal ideal. 
\item The avoidance property need not pass to subsets or supersets.
For instance a maximal ideal may be contained in the union of the other maximal ideals (e.g. $(x,y)$ in  $k[x,y]$). 
\end{enumerate}
\end{example}

%Classically, the set $\mathcal{S}$ of ideals is taken to be a set of prime ideals. The usual exception to this is when the ring contains a suitably large field, so that vector space arguments can be used, as in Example \ref{ex:avoidance}\mathbf{(iii)}. However, there is a larger class of ideals than primes for which avoidance applies, using arguments which are ring-theoretic in nature.
In the following result, the prime avoidance lemma is generalized for radical ideals. 

\begin{theorem} $($Radical avoidance$)$ \label{thm:radical_avoidance} If an ideal $I$ of a ring $R$ is contained in the union of a finite family $\{ I_k \}$ of radical ideals of $R$, then $I\subseteq I_{k}$ for some $k$. 
\end{theorem}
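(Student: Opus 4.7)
The plan is to reduce to the classical prime avoidance lemma (Example~\ref{ex:avoidance}(2)) by choosing, for each $k$, a prime ideal $\mathfrak{p}_k$ containing $I_k$ that is small enough to detect a witness of non-containment. The crucial input is that any radical ideal equals the intersection of the primes containing it.

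I would argue by contradiction and assume $I \not\subseteq I_k$ for every $k$ in the (finite) index set. For each such $k$ fix an element $a_k \in I \setminus I_k$. Since $I_k$ is radical, $I_k = \bigcap_{\mathfrak{p} \supseteq I_k} \mathfrak{p}$, so the assumption $a_k \notin I_k$ produces a prime $\mathfrak{p}_k \supseteq I_k$ with $a_k \notin \mathfrak{p}_k$. Next I would verify that $I \subseteq \mathfrak{p}_1 \cup \cdots \cup \mathfrak{p}_n$: given any $x \in I$, the original hypothesis places $x$ in some $I_j$, and then in $\mathfrak{p}_j$ since $\mathfrak{p}_j \supseteq I_j$.

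Classical finite prime avoidance applied to $\{\mathfrak{p}_1, \ldots, \mathfrak{p}_n\}$ then yields an index $j$ with $I \subseteq \mathfrak{p}_j$. But this forces $a_j \in I \subseteq \mathfrak{p}_j$, contradicting the choice of $\mathfrak{p}_j$.

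I do not anticipate any substantive obstacle: the proof reduces cleanly to the prime case, and the hypothesis ``$I_k$ radical'' is used in exactly one place, namely to promote the single non-membership $a_k \notin I_k$ to a prime over $I_k$ avoiding $a_k$. The only point where one must be careful is the order of quantifiers in the middle step: the primes $\mathfrak{p}_k$ must be chosen \emph{after} fixing the witnesses $a_k$, since they depend on them.
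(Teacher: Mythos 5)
Your proposal is correct and is essentially the paper's own argument: fix witnesses $a_k \in I \setminus I_k$, use radicality to find primes $\mathfrak{p}_k \supseteq I_k$ with $a_k \notin \mathfrak{p}_k$, and conclude by classical prime avoidance. The only cosmetic difference is that the paper justifies the existence of $\mathfrak{p}_k$ via the multiplicative set $\{1, a_k, a_k^2, \ldots\}$ rather than by quoting that a radical ideal is the intersection of the primes containing it, which is the same fact.
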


\begin{proof} Suppose $I$ is not contained in any of the $I_{k}$, so there exists $f_k \in I \setminus I_k$ for each $k$.
Then $I_k$ does not meet the multiplicative set $\{1, f_k, f_k^2, \ldots \}$, so there is a prime ideal $\mathfrak{p}_k$ of $R$ containing $I_k$ that also does not contain $f_k$.
Then clearly $I \subseteq \bigcup \limits_{k} \mathfrak{p}_{k}$, but this is in contradiction with the prime avoidance lemma.
\end{proof}

\begin{remark}
In Theorem \ref{thm:radical_avoidance}, just like in the usual prime avoidance lemma \cite[Theorem 3.61]{Sharp}, we may assume that two of the $I_{k}$'s are arbitrary ideals (not necessarily radical). 
\end{remark}

Similarly, the version of prime avoidance given by Edward Davis (cf. e.g. \cite[Ex. 16.8]{Matsumura} or \cite[Theorem 3.64]{Sharp}) can also be generalized to radical ideals. 

\begin{corollary} $($Davis' radical avoidance$)$ \label{Corollary E.D. radical avoid} Let $\{ I_k \}$ be a finite family of radical ideals of a ring $R$ and $f\in R$.
If $I$ is an ideal of $R$ such that $Rf+I\nsubseteq\bigcup\limits_{k}I_{k}$, then there exists $g\in I$ such that $f+g\notin\bigcup\limits_{k}I_{k}$. 
\end{corollary}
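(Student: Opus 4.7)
The plan is to reduce to Davis' classical avoidance lemma for prime ideals by replacing each radical ideal $I_k$ with a prime ideal $\mathfrak{p}_k$ that contains $I_k$ while still witnessing the noncontainment hypothesis. This is the same reduction trick used in the proof of Theorem \ref{thm:radical_avoidance}.

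First, for each index $k$, since $Rf+I \nsubseteq \bigcup_j I_j \supseteq I_k$, I can choose some $a_k \in Rf+I$ with $a_k \notin I_k$. Because $I_k$ is radical, no positive power of $a_k$ lies in $I_k$, so the multiplicative set $\{1, a_k, a_k^2, \ldots\}$ is disjoint from $I_k$. A standard Zorn's-lemma argument then produces a prime ideal $\mathfrak{p}_k \supseteq I_k$ with $a_k \notin \mathfrak{p}_k$, so in particular $Rf + I \nsubseteq \mathfrak{p}_k$.

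Next, since $\{\mathfrak{p}_k\}$ is a finite family of primes, the classical prime avoidance lemma (in contrapositive form) promotes these individual noncontainments to the single noncontainment $Rf+I \nsubseteq \bigcup_k \mathfrak{p}_k$. Applying Davis' avoidance for primes to the family $\{\mathfrak{p}_k\}$, I obtain an element $g \in I$ such that $f+g \notin \bigcup_k \mathfrak{p}_k$. Since $I_k \subseteq \mathfrak{p}_k$, this same $g$ satisfies $f+g \notin \bigcup_k I_k$, which is the desired conclusion.

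I do not anticipate a real obstacle: the corollary is essentially a formal reduction, exactly parallel to how Theorem \ref{thm:radical_avoidance} is deduced from the classical prime avoidance lemma. The only substantive step is the extraction of the witnesses $a_k$ in the first paragraph; once the primes $\mathfrak{p}_k$ are in hand, Davis' prime-ideal version does all the remaining work.
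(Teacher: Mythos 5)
Your proposal is correct and follows essentially the same route as the paper's proof: pass from each radical $I_k$ to a prime $\mathfrak{p}_k \supseteq I_k$ still missing $Rf+I$ (using radicality and a multiplicative set), apply classical prime avoidance to conclude $Rf+I \nsubseteq \bigcup_k \mathfrak{p}_k$, then invoke Davis' prime avoidance and pull the conclusion back to the $I_k$. The only difference is that you spell out the extraction of the witnesses $a_k$ and the Zorn's-lemma step, which the paper leaves implicit.
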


\begin{proof} For each $k$, there exists a prime ideal $\mathfrak{p}_{k}$ of $R$ such that $I_{k}\subseteq\mathfrak{p}_{k}$ but $Rf+I\nsubseteq\mathfrak{p}_{k}$.
So by prime avoidance, $Rf+I\nsubseteq\bigcup\limits_{k}\mathfrak{p}_{k}$.
By Davis' prime avoidance, there exists $g\in I$ such that $f+g\notin\mathfrak{p}_{k}$ for all $k$. \end{proof}

The above results exemplify the fact that some properties of prime ideals can be generalized to radical ideals.
% In the same vein, see also \cite{Fontana et al.}.
In fact, what the proof of Theorem \ref{thm:radical_avoidance} shows is that the avoidance property passes to intersections in the following sense: 

\begin{theorem} \label{thm:intersectionAvoidance}
Let $\mathcal{S}$ be a set of ideals of $R$, and let $\mathcal{T}$ be the set of all intersections of ideals in $\mathcal{S}$.
If every subset of $\mathcal{S}$ satisfies avoidance, then every subset of $\mathcal{T}$ satisfies avoidance as well. 
\end{theorem}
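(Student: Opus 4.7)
The plan is to lift the argument from the proof of Theorem \ref{thm:radical_avoidance} verbatim, replacing the pair (prime ideal, radical ideal) with the pair (element of $\mathcal{S}$, element of $\mathcal{T}$). The role played there by the fact that a radical ideal equals the intersection of the primes containing it will be played here by the very definition of $\mathcal{T}$.

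Concretely, I would fix an arbitrary subset $\mathcal{T}' \subseteq \mathcal{T}$ and an ideal $J$ with $J \subseteq \bigcup_{T \in \mathcal{T}'} T$, and argue by contradiction: assume $J \not\subseteq T$ for every $T \in \mathcal{T}'$. For each such $T$, write $T = \bigcap_{I \in \mathcal{S}_T} I$ with $\mathcal{S}_T \subseteq \mathcal{S}$, and choose a witness $f_T \in J \setminus T$. Since $f_T$ fails to lie in the intersection defining $T$, there must be some $I_T \in \mathcal{S}_T$ with $f_T \notin I_T$; by construction $T \subseteq I_T$.

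Next I would collect these representatives into $\mathcal{S}' := \{I_T : T \in \mathcal{T}'\} \subseteq \mathcal{S}$. The containments $T \subseteq I_T$ give
\[ J \;\subseteq\; \bigcup_{T \in \mathcal{T}'} T \;\subseteq\; \bigcup_{T \in \mathcal{T}'} I_T \;=\; \bigcup_{I \in \mathcal{S}'} I, \]
so the avoidance hypothesis applied to $\mathcal{S}' \subseteq \mathcal{S}$ yields $J \subseteq I_{T_0}$ for some $T_0 \in \mathcal{T}'$. But then $f_{T_0} \in J \subseteq I_{T_0}$, contradicting the choice of $I_{T_0}$.

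There is no real obstacle beyond bookkeeping: the only substantive point is recognizing that failure of $f_T$ to belong to an intersection $T$ automatically produces a single ideal $I_T \in \mathcal{S}$ sandwiched as $T \subseteq I_T$ with $f_T \notin I_T$, which is exactly the structural input needed to reduce a would-be covering by elements of $\mathcal{T}'$ to a covering by elements of $\mathcal{S}'$. Provided one is careful to allow $\mathcal{T}'$ (and hence $\mathcal{S}'$) to be of arbitrary cardinality, the axiom of choice (used to select the $f_T$ and $I_T$) is the only non-algebraic ingredient.
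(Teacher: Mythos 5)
Your proposal is correct and is essentially the paper's own argument: pick a witness $f_T \in J \setminus T$ for each member of the covering family, use the intersection structure to produce $I_T \in \mathcal{S}$ with $T \subseteq I_T$ and $f_T \notin I_T$, and then apply the avoidance hypothesis to $\{I_T\}$ to reach a contradiction. The only (immaterial) difference is one of phrasing: the paper concludes that $\{I_T\}$ fails avoidance outright, while you invoke avoidance to get $J \subseteq I_{T_0}$ and then contradict the choice of $f_{T_0}$.
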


\begin{proof}
Let $\{I_t \mid t \in T\}$ be a subset of $\mathcal{T}$, and suppose $\displaystyle J \subseteq \bigcup_{t \in T} I_t$, but $J \not \subseteq I_t$ for all $t \in T$.
For every $t \in T$, choose $f_t \in J \setminus I_t$.
Since $I_t$ is an intersection of elements of $\mathcal{S}$, there exists $K_t \in \mathcal{S}$ such that $I_t \subseteq K_t$ and $f_t \not \in K_t$.
Then $\displaystyle J \subseteq \bigcup_{t \in T} I_t \subseteq \bigcup_{t \in T} K_t$, but $J \not \subseteq K_t$ for all $t$, so $\{K_t \mid t \in T\} \subseteq \mathcal{S}$ does not satisfy avoidance, a contradiction. \end{proof}

Note that if $\mathcal{S} = \Spec R$, then the proof of Theorem \ref{thm:intersectionAvoidance}, with classical prime avoidance, yields an alternative proof of Theorem \ref{thm:radical_avoidance}.

\section{Prime avoidance and C.P. rings}

We now investigate the rings in which every set of primes satisfies avoidance -- these are the so-called \emph{compactly packed} (or C.P.) rings.
That is, $R$ is C.P. if whenever an ideal $I$ is contained in the union of a family $\{ \mathfrak{p}_i \}$ of prime ideals, then $I\subseteq\mathfrak{p}_i$ for some $i$.
C.P. rings have been studied in the literature, see e.g. \cite{Pakala-Shores}, \cite{Ries-Vis} and \cite{Smith}. 

We first record various ring-theoretic constructions which preserve the C.P. property:

\begin{proposition} \label{prop:CPconstructions}
Let $R$ be a ring.
\begin{enumerate}[label=(\arabic*)]
    \item If $R/\mathfrak{N}$ is C.P., then $R$ is C.P.
    \item If $R$ is C.P., then so is any quotient or localization of $R$.
    \item A finite product of C.P. rings is C.P.
\end{enumerate}
\end{proposition}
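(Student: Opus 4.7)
The plan is to prove each part by reducing avoidance in the new ring to avoidance in $R$ (or, for (3), in the factors) via the standard correspondences on prime ideals.

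For (1), the key point is that $\mathfrak{N}$ lies in every prime of $R$, so the quotient map $\pi\colon R \twoheadrightarrow R/\mathfrak{N}$ induces an order-preserving bijection between $\Spec(R)$ and $\Spec(R/\mathfrak{N})$. Given a covering $I\subseteq\bigcup_i \mathfrak{p}_i$ in $R$, I would pass to $\pi$-images to get $\pi(I)\subseteq\bigcup_i\pi(\mathfrak{p}_i)$ in $R/\mathfrak{N}$, apply the C.P. hypothesis there to extract some $j$ with $\pi(I)\subseteq\pi(\mathfrak{p}_j)$, and pull back to $I\subseteq\mathfrak{p}_j$ using $\mathfrak{N}\subseteq\mathfrak{p}_j$.

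For (2), both halves follow from standard contraction and extension of ideals. For a quotient $R/J$, primes of $R/J$ correspond bijectively to primes of $R$ containing $J$, so a cover of an ideal $\overline{I}$ by such primes lifts to a cover $I+J\subseteq\bigcup_i\mathfrak{p}_i$ in $R$; applying C.P. of $R$ and pushing back gives the conclusion. For a localization $S^{-1}R$, I would contract a given covering $I'\subseteq\bigcup_i\mathfrak{p}_i'$ along $R\to S^{-1}R$, apply C.P. in $R$ to the contracted ideals, and re-extend, using that on ideals of $S^{-1}R$ contraction followed by extension is the identity.

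For (3), an induction reduces to the case $R=R_1\times R_2$. Here every ideal decomposes as $I_1\times I_2$ with $I_j$ an ideal of $R_j$, and every prime has the form $\mathfrak{p}\times R_2$ (call this type $A$) or $R_1\times\mathfrak{q}$ (type $B$). Partitioning the given covering family into these two types, the crucial dichotomy to establish is: either every $a\in I_1$ lies in some type-$A$ prime of the cover, in which case C.P. of $R_1$ produces a single type-$A$ prime containing $I_1$ (and hence all of $I$); or some $a_0\in I_1$ avoids every type-$A$ prime, which forces all of $I_2$ into the type-$B$ primes, whereupon C.P. of $R_2$ finishes. This last case needs the most careful bookkeeping, but none of the three parts presents a serious technical obstacle --- each is a routine application of $\Spec$-functorialities combined with the C.P. hypothesis.
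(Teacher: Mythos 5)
Your proposal is correct and, for part (1), is essentially identical to the paper's proof (pass to $(I+\mathfrak{N})/\mathfrak{N}\subseteq\bigcup_i\mathfrak{p}_i/\mathfrak{N}$, apply C.P. of $R/\mathfrak{N}$, and pull back using $\mathfrak{N}\subseteq\mathfrak{p}_i$). Parts (2) and (3) are left as exercises in the paper, and your contraction/extension arguments and the two-case dichotomy for $R_1\times R_2$ fill them in correctly.
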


\begin{proof} We illustrate the proof of (1). Assume $R/\mathfrak{N}$ is C.P. If $\{ \mathfrak{p}_i \}$ are primes of $R$ and $I$ is an ideal of $R$ with $I \subseteq \bigcup\limits_{i} \mathfrak{p}_{i}$,
then $(I+\mathfrak{N})/\mathfrak{N} \subseteq \bigcup
\limits_{i}\mathfrak{p}_{i}/\mathfrak{N}$. Then
$(I+\mathfrak{N})/\mathfrak{N} \subseteq \mathfrak{p}_{i}/\mathfrak{N}$ for some $i$, so $I \subseteq \mathfrak{p}_{i}$.

The proofs of (2) and (3) are straightforward and left as exercises. \end{proof}

We now turn towards various characterizations of the C.P. property.
As we will see, the C.P. property turns out to imply Noetherianness of the prime spectrum in the Zariski topology, so we first recall some criteria for this to occur.

\begin{proposition} \label{prop:noetherianZariskiTop} Consider the following conditions on a ring $R$: 
\begin{enumerate}
\item $\Spec(R)$ is a Noetherian space in the Zariski topology 
\item $R$ satisfies the ascending chain condition on radical ideals 
\item Every radical ideal of $R$ is the radical of a finitely generated ideal 
\item Every ideal of $R$ has only finitely many minimal primes. 
\end{enumerate}
Then $\mathbf{(i)} \Leftrightarrow \mathbf{(ii)} \Leftrightarrow \mathbf{(iii)} \Rightarrow \mathbf{(iv)}$, and $\mathbf{(iv)} \Rightarrow \mathbf{(i)}$ if $\dim R$ is finite.
\end{proposition}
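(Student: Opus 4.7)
The plan is to prove the equivalences by cycling through the conditions in the order $\mathbf{(i)} \Leftrightarrow \mathbf{(ii)}$, $\mathbf{(ii)} \Leftrightarrow \mathbf{(iii)}$, $\mathbf{(i)} \Rightarrow \mathbf{(iv)}$, and then the conditional $\mathbf{(iv)} \Rightarrow \mathbf{(i)}$.

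The equivalence $\mathbf{(i)} \Leftrightarrow \mathbf{(ii)}$ is essentially formal: the map $I \mapsto V(I)$ is an inclusion-reversing bijection between radical ideals of $R$ and Zariski-closed subsets of $\Spec(R)$, so d.c.c. on closed sets (the definition of a Noetherian space) translates directly into a.c.c. on radical ideals. For $\mathbf{(ii)} \Rightarrow \mathbf{(iii)}$, I would fix a radical ideal $I$ and, using (ii), extract a maximal element $\sqrt{J_0}$ from the family of radicals of finitely generated subideals of $I$; for any $f \in I$, maximality forces $\sqrt{J_0 + Rf} = \sqrt{J_0}$, hence $f \in \sqrt{J_0}$ and so $I = \sqrt{J_0}$. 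Conversely, for $\mathbf{(iii)} \Rightarrow \mathbf{(ii)}$, given an ascending chain of radical ideals $I_1 \subseteq I_2 \subseteq \cdots$, the union $I = \bigcup_n I_n$ is again radical; writing $I = \sqrt{J}$ with $J$ finitely generated, each generator lies in some $I_{n_k}$, and the chain stabilizes past the maximum of the $n_k$.

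The implication $\mathbf{(i)} \Rightarrow \mathbf{(iv)}$ follows from the general topological fact that every closed subset of a Noetherian space has only finitely many irreducible components, combined with the identification of the irreducible components of $V(I)$ with $V(\mathfrak{p})$ for $\mathfrak{p}$ a minimal prime of $I$.

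The genuinely substantive part, and the main obstacle, is $\mathbf{(iv)} \Rightarrow \mathbf{(i)}$ under the hypothesis $\dim R < \infty$. I plan to induct on $d = \dim R$. The base case $d = 0$ is immediate: every prime is minimal over $(0)$, so (iv) forces $\Spec R$ to be finite. For the inductive step, suppose towards contradiction that we have an infinite strictly descending chain $V(I_1) \supsetneq V(I_2) \supsetneq \cdots$. Decompose $V(I_1)$ into its finitely many irreducible components $V(\mathfrak{p}_1), \dots, V(\mathfrak{p}_k)$ using (iv), and write $V(I_n) = \bigcup_{j=1}^k V(I_n + \mathfrak{p}_j)$. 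A pigeonhole argument produces an index $j$ for which the chain $V(I_n + \mathfrak{p}_j)$ in $\Spec(R/\mathfrak{p}_j)$ is still infinite and strictly descending; passing to a tail and using that this chain is eventually a proper subset of $V(\mathfrak{p}_j)$, I decompose its first term into components $V(\mathfrak{q}_1), \dots, V(\mathfrak{q}_m)$ with each $\mathfrak{q}_l \supsetneq \mathfrak{p}_j$ strictly, so that $\dim R/\mathfrak{q}_l \le d-1$. Since (iv) passes to quotients, the inductive hypothesis applied to each $R/\mathfrak{q}_l$ forces the chain restricted to each $V(\mathfrak{q}_l)$ to stabilize, and another pigeonhole yields a contradiction. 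The delicate bookkeeping here — making sure the dimension actually drops and that (iv) is genuinely inherited by the relevant quotients — is where the finite-dimensionality hypothesis earns its keep.
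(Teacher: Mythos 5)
Your argument is correct, but it is worth noting that the paper gives no proof of this proposition at all: it simply cites Ohm--Pendleton (Propositions 1.1 and 2.1), so your write-up is a self-contained replacement for the quoted reference rather than a variant of an argument in the text. The equivalences $\mathbf{(i)}\Leftrightarrow\mathbf{(ii)}\Leftrightarrow\mathbf{(iii)}$ and the implication $\mathbf{(i)}\Rightarrow\mathbf{(iv)}$ are handled exactly as in the standard treatment (the inclusion-reversing bijection between radical ideals and closed sets, the maximal-element trick for $\mathbf{(ii)}\Rightarrow\mathbf{(iii)}$, finitely many irreducible components of a Noetherian closed set), and these steps are all sound. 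The genuinely substantive contribution is your induction on $\dim R$ for $\mathbf{(iv)}\Rightarrow\mathbf{(i)}$, and it checks out: the decomposition $V(I_n)=\bigcup_j V(I_n+\mathfrak{p}_j)$, the pigeonhole selecting a component on which the chain fails to stabilize, the strict inclusion $\mathfrak{q}_l\supsetneq\mathfrak{p}_j$ forcing $\dim R/\mathfrak{q}_l\le d-1$, the observation that $\mathbf{(iv)}$ passes to quotients, and the final pigeonhole are all valid, and the argument visibly uses finite-dimensionality (as it must, since an infinite-dimensional valuation ring satisfies $\mathbf{(iv)}$ but not $\mathbf{(i)}$). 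One cosmetic point: the first pigeonhole produces a \emph{non-stabilizing} descending chain in some $V(\mathfrak{p}_j)$, from which you should explicitly extract a strictly descending subchain before proceeding; this is immediate but should be said. What your approach buys is independence from the literature; what the paper's citation buys is brevity, at the cost of sending the reader to Ohm--Pendleton for precisely the dimension-induction step you carry out here.
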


\begin{proof} Cf. \cite[Propositions 1.1 and 2.1]{Ohm}. \end{proof}

We are now ready to state our characterizations of C.P. rings.
The following result improves on \cite[Theorem 1]{Pakala-Shores}, \cite[Theorem 1.1]{Ries-Vis} and \cite{Smith}
with the addition of conditions $\mathbf{(v)}$ and $\mathbf{(vi)}$. 

\begin{theorem}\label{Theorem iv 2020} The following are equivalent for a ring $R$: 
\begin{enumerate}
\item $R$ is a C.P. ring. 
\item If a prime ideal $\mathfrak{p}$ of $R$ is contained in the union of a family $\{ \mathfrak{p}_i \}$ of prime ideals of $R$, then $\mathfrak{p}\subseteq\mathfrak{p}_{i}$ for some $i$. 
\item Every radical ideal of $R$ is the radical of a principal ideal. 
\item Every prime ideal of $R$ is the radical of a principal ideal. 
\item $\Spec(R)$ is a Noetherian space in the Zariski topology, and for any $f,g\in R$ there exists $h\in R$ such that $V(f)\cap V(g)=V(h)$. 
\item If an ideal $I$ of $R$ is contained in the union of a family $\{ I_k \}$ of radical ideals of $R$, then $I\subseteq I_{k}$ for some $k$. 
\end{enumerate}
\end{theorem}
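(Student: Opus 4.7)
The plan is to close the cyclic chain
$\mathbf{(i)} \Rightarrow \mathbf{(vi)} \Rightarrow \mathbf{(iii)} \Rightarrow \mathbf{(iv)} \Rightarrow \mathbf{(ii)} \Rightarrow \mathbf{(i)}$
and to append the equivalence $\mathbf{(iii)} \Leftrightarrow \mathbf{(v)}$ on the side.

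First, $\mathbf{(i)} \Rightarrow \mathbf{(vi)}$ is a direct application of Theorem~\ref{thm:intersectionAvoidance} with $\mathcal{S} = \Spec R$: condition $\mathbf{(i)}$ is exactly the assertion that every subset of $\Spec R$ satisfies avoidance, so the theorem transports avoidance to the collection $\mathcal{T}$ of arbitrary intersections of primes, i.e.\ the radical ideals of $R$. For $\mathbf{(vi)} \Rightarrow \mathbf{(iii)}$, given a radical ideal $J$ I would use the identity $J = \bigcup_{h \in J} \sqrt{(h)}$: the inclusion $\supseteq$ holds because $h \in J$ forces $\sqrt{(h)} \subseteq \sqrt{J} = J$, and $\subseteq$ is immediate from $h \in \sqrt{(h)}$. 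Applying $\mathbf{(vi)}$ to the radical family $\{\sqrt{(h)} : h \in J\}$ yields $h_0 \in J$ with $J \subseteq \sqrt{(h_0)}$, so $J = \sqrt{(h_0)}$. Next, $\mathbf{(iii)} \Rightarrow \mathbf{(iv)}$ is immediate since primes are radical, and $\mathbf{(iv)} \Rightarrow \mathbf{(ii)}$ is the one-line observation that if $\mathfrak{p} = \sqrt{(h)} \subseteq \bigcup \mathfrak{p}_i$, then $h \in \mathfrak{p}_j$ for some $j$, whence $\mathfrak{p} \subseteq \mathfrak{p}_j$.

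The hard part of the cycle is $\mathbf{(ii)} \Rightarrow \mathbf{(i)}$, since $\mathbf{(ii)}$ only directly constrains primes contained in a union of primes, whereas $\mathbf{(i)}$ permits an arbitrary ideal $I$. The plan is to squeeze a prime in between $I$ and the union: given $I \subseteq \bigcup \mathfrak{p}_i$, the set $S := R \setminus \bigcup \mathfrak{p}_i$ is multiplicatively closed (each $\mathfrak{p}_i$ being prime) and disjoint from $I$. A standard Zorn's lemma argument then produces a prime $\mathfrak{q} \supseteq I$ with $\mathfrak{q} \cap S = \emptyset$, i.e.\ $\mathfrak{q} \subseteq \bigcup \mathfrak{p}_i$. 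Applying $\mathbf{(ii)}$ to $\mathfrak{q}$ gives $\mathfrak{q} \subseteq \mathfrak{p}_j$ for some $j$, and the chain $I \subseteq \mathfrak{q} \subseteq \mathfrak{p}_j$ closes the cycle.

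Finally, for $\mathbf{(iii)} \Leftrightarrow \mathbf{(v)}$ I would invoke Proposition~\ref{prop:noetherianZariskiTop}, which already identifies Noetherianness of $\Spec R$ with the property that every radical ideal is the radical of a finitely generated ideal, so the remaining task is to exchange ``finitely generated'' for ``principal.'' Under $\mathbf{(iii)}$, Noetherianness is automatic and the intersection property follows by applying $\mathbf{(iii)}$ to the radical ideal $\sqrt{(f,g)}$. Conversely, under $\mathbf{(v)}$ every radical ideal takes the form $\sqrt{(f_1,\ldots,f_n)}$, and iterating $V(a) \cap V(b) = V(h)$ collapses $V(f_1) \cap \cdots \cap V(f_n)$ to a single $V(h)$, yielding $\sqrt{(f_1,\ldots,f_n)} = \sqrt{(h)}$ as required.
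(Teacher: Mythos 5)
Your proof is correct; every step checks out, including the small points one should verify: in $\mathbf{(i)}\Rightarrow\mathbf{(vi)}$ the non-unit radical ideals are indeed intersections of primes (and a unit member of the family makes $\mathbf{(vi)}$ trivial), in $\mathbf{(vi)}\Rightarrow\mathbf{(iii)}$ the family $\{\sqrt{(h)} : h\in J\}$ really does consist of radical ideals covering $J$, and in $\mathbf{(ii)}\Rightarrow\mathbf{(i)}$ the complement of the union of primes is multiplicatively closed, so the standard Zorn argument squeezes a prime $\mathfrak{q}$ between $I$ and the union.

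The route differs from the paper's mainly in organization and self-containedness. The paper proves only $\mathbf{(iii)}\Leftrightarrow\mathbf{(v)}$ (exactly as you do, via Proposition \ref{prop:noetherianZariskiTop}) and $\mathbf{(ii)}\Leftrightarrow\mathbf{(vi)}$ (via Theorem \ref{thm:intersectionAvoidance}) in-house, and outsources $\mathbf{(i)}\Leftrightarrow\mathbf{(ii)}$ and $\mathbf{(i)}\Rightarrow\mathbf{(iii)}\Rightarrow\mathbf{(iv)}\Rightarrow\mathbf{(i)}$ to Smith and Pakala--Shores. You instead close a single cycle $\mathbf{(i)}\Rightarrow\mathbf{(vi)}\Rightarrow\mathbf{(iii)}\Rightarrow\mathbf{(iv)}\Rightarrow\mathbf{(ii)}\Rightarrow\mathbf{(i)}$, supplying proofs of the cited implications: your $\mathbf{(ii)}\Rightarrow\mathbf{(i)}$ is the ``easy exercise'' the paper attributes to \cite{Smith}, and your $\mathbf{(vi)}\Rightarrow\mathbf{(iii)}$ via $J=\bigcup_{h\in J}\sqrt{(h)}$ is a pleasant shortcut: because the covering family is already radical, condition $\mathbf{(vi)}$ applies directly, whereas the classical argument for $\mathbf{(i)}\Rightarrow\mathbf{(iii)}$ must first manufacture primes $\mathfrak{q}_h\supseteq (h)$ not containing $J$ (that choice is in effect hidden inside Theorem \ref{thm:intersectionAvoidance}, which you invoke once, for $\mathbf{(i)}\Rightarrow\mathbf{(vi)}$). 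What your arrangement buys is a reference-free proof in which $\mathbf{(vi)}$ does real work rather than being appended at the end; what the paper's arrangement buys is brevity and an explicit pointer to where the classical equivalences live in the literature.
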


\begin{proof} $\mathbf{(i)}\Leftrightarrow\mathbf{(ii)}$ is an easy exercise, see e.g. \cite{Smith}.

For $\mathbf{(i)}\Rightarrow\mathbf{(iii)} \Rightarrow\mathbf{(iv)}\Rightarrow\mathbf{(i)}$ see
\cite{Smith} or \cite[Theorem 1]{Pakala-Shores}. 

$\mathbf{(iii)}\Rightarrow\mathbf{(v)}:$ By Proposition \ref{prop:noetherianZariskiTop}$\mathbf{(iii)}$, $\Spec(R)$ is Noetherian.
For any $f,g \in R$, by hypothesis there exists $h\in R$ such that $\sqrt{(f,g)}=\sqrt{(h)}$.
It follows that $V(f)\cap V(g) = V(f,g) = V(h)$. 

$\mathbf{(v)}\Rightarrow\mathbf{(iii)}:$ Let $I$ be a radical ideal of $R$.
By Proposition \ref{prop:noetherianZariskiTop}$\mathbf{(iii)}$, there exists a finitely generated ideal $J$ of $R$ such that $V(I)=V(J)$.
By hypothesis, there exists $h\in R$ such that $V(J)=V(h)$.
It follows that $I=\sqrt{(h)}$. 

$\mathbf{(ii)}\Leftrightarrow\mathbf{(vi)}:$ This follows immediately from Theorem \ref{thm:intersectionAvoidance}. \end{proof}
% By the hypothesis, $\sqrt{I} = \sqrt{(x)}$ for some $x\in R$.
% Thus there exists $n \geq 1$ such that $x^n \in I_{k}$ for some $k$.
% It follows that $I \subseteq \sqrt{I} = \sqrt{(x^n)} \subseteq I_{k}$.
% $\mathbf{(vi)}\Rightarrow\mathbf{(i)}:$ Clear.

\begin{remark} \label{rem:CPimpliesNoetherianSpectrum}
\begin{enumerate}[label=(\arabic*), leftmargin=*]
\item It follows from Proposition \ref{prop:noetherianZariskiTop} and Theorem \ref{Theorem iv 2020} that every C.P. ring satisfies the ascending chain condition on radical ideals and has finitely many minimal primes. 
\item Recall that the arithmetic rank of an ideal $I$ is the least number of elements required to generate $I$ up to radical, i.e.
\[
\ara I := \inf \big\{ n \mid \exists a_1, \ldots, a_n \in R \text{ with } \sqrt{(a_1, \ldots, a_n)} = \sqrt{I} \big\}
\]
Another way to phrase the proof of Theorem \ref{Theorem iv 2020} is: $\mathbf{(v)}$ says exactly that $\ara I < \infty$ for all ideals $I$ and if $\ara I < \infty$, then $\ara I \le 1$.
This is clearly equivalent to $\ara I \le 1$ for all ideals $I$, which is $\mathbf{(iii)}$. 
\item Since $\height I \le \ara I$ for all ideals in a Noetherian ring, it follows from Theorem \ref{Theorem iv 2020}$\mathbf{(iv)}$ that a Noetherian C.P. ring has dimension $\le 1$. 
\end{enumerate}
\end{remark}

\begin{proposition}\label{Prop flat c.p.} Let $R$ be a C.P. ring.
\begin{enumerate}[label=(\arabic*)]
    \item The collection of $V(f)$ with $f\in R$ forms a base for the flat opens of $\Spec(R)$.
    \item The flat closed subsets of $\Spec(R)$ are precisely of the form $\Ima\pi^{\ast}$ where $\pi:R\rightarrow S^{-1}R$ is the canonical map and $S$ is a multiplicative subset of $R$.
\end{enumerate}
\end{proposition}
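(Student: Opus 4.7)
The plan is to use the characterizations of C.P. rings established in Theorem \ref{Theorem iv 2020}, especially condition $\mathbf{(v)}$, to collapse the standard flat base (consisting of $V(I)$ for finitely generated $I$) down to the principal sets $V(f)$. Once (1) is in hand, (2) will follow by writing a flat-closed set as an intersection of sets of the form $\Spec(R)\setminus V(f)$ and assembling a multiplicative subset $S$ out of the $f$'s.

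For part (1), I would first observe that each $V(f)=V((f))$ lies in the standard flat base since $(f)$ is finitely generated, so every $V(f)$ is flat-open. To prove the $V(f)$'s form a base it suffices to rewrite each basic flat-open $V(I)$ with $I=(a_1,\ldots,a_n)$ as a single $V(h)$. Writing $V(I)=V(a_1)\cap\cdots\cap V(a_n)$ and applying Theorem \ref{Theorem iv 2020}$\mathbf{(v)}$ inductively, each such finite intersection collapses to $V(h)$ for some $h\in R$, which gives (1).

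For part (2), the first step is the standard identification
\[
\Ima \pi^{\ast} = \{\mathfrak{p}\in\Spec(R) : \mathfrak{p}\cap S=\emptyset\} = \bigcap_{s\in S}\bigl(\Spec(R)\setminus V(s)\bigr),
\]
which exhibits $\Ima\pi^{\ast}$ as an intersection of flat-closed sets, hence flat-closed. Conversely, given an arbitrary flat-closed $C\subseteq\Spec(R)$, part (1) lets me write $\Spec(R)\setminus C=\bigcup_{f\in T}V(f)$ for some $T\subseteq R$. Letting $S$ be the multiplicative subset of $R$ generated by $T$, the primality observation that $f_{1}\cdots f_{k}\in\mathfrak{p}$ iff some $f_{i}\in\mathfrak{p}$ gives
\[
C = \bigcap_{f\in T}\bigl(\Spec(R)\setminus V(f)\bigr) = \bigcap_{s\in S}\bigl(\Spec(R)\setminus V(s)\bigr) = \Ima \pi^{\ast},
\]
with $\pi\colon R\to S^{-1}R$ the canonical map.

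I do not anticipate a substantive obstacle here: the heavy lifting is done by the equivalences in Theorem \ref{Theorem iv 2020}, and the rest is bookkeeping. The only point requiring mild care is the multiplicative-closure step in (2), where one must check that enlarging $T$ to the multiplicative set it generates neither shrinks the intersection nor affects which primes are excluded; this is immediate from primality but worth stating explicitly.
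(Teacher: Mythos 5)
Your proof is correct, and for part (1) it is essentially the paper's argument: the paper invokes Theorem \ref{Theorem iv 2020}$\mathbf{(iii)}$ directly (every radical ideal is the radical of a principal ideal, so $V(I)=V(f)$ for $I$ finitely generated), whereas you reach the same conclusion by iterating condition $\mathbf{(v)}$ on $V(I)=V(a_1)\cap\cdots\cap V(a_n)$; the difference is cosmetic. For part (2), however, your route is genuinely different. The paper does not use part (1) at all: given a flat-closed set $E$, it takes the saturated multiplicative set $S:=R\setminus\bigcup_{\mathfrak{p}\in E}\mathfrak{p}$, notes $E\subseteq\Ima\pi^{\ast}$, and for the reverse inclusion applies the C.P. property once more to a prime $\mathfrak{q}\subseteq\bigcup_{\mathfrak{p}\in E}\mathfrak{p}$, finishing with the observation that flat-closed sets are stable under generalization. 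You instead deduce (2) formally from (1): cover the complement of the closed set by principal basic opens $V(f)$, $f\in T$, let $S$ be the multiplicative set generated by $T$, and use primality to identify $\{\mathfrak{p}:\mathfrak{p}\cap T=\emptyset\}=\{\mathfrak{p}:\mathfrak{p}\cap S=\emptyset\}=\Ima\pi^{\ast}$. Your approach makes (2) a purely formal consequence of (1) -- so it would hold in any ring whose flat topology has a base of principal sets $V(f)$ -- at the cost of a less canonical choice of $S$; the paper's approach uses infinite prime avoidance directly, produces the natural multiplicative set $R\setminus\bigcup_{\mathfrak{p}\in E}\mathfrak{p}$, and makes explicit the generalization-stability of flat-closed sets. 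Both arguments share the same harmless degenerate case $E=\emptyset$, where $S$ necessarily contains $0$ and $S^{-1}R$ is the zero ring.
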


\begin{proof} (1): If $I$ is a finitely generated ideal of $R$, then by Theorem \ref{Theorem iv 2020}$\mathbf{(iii)}$ there exists $f\in R$ such that $\sqrt{I} = \sqrt{(f)}$, so $V(I) = V(f)$. 

(2): Clearly every subset of the given form is flat closed.
Conversely, if $E \subseteq \Spec R$ is a flat closed then $E\subseteq\Ima\pi^{\ast}$ where $S := R \setminus \bigcup\limits_{\mathfrak{p}\in E}\mathfrak{p}$ and $\pi:R\rightarrow S^{-1}R$ is the canonical map.
If $\mathfrak{q} \in \Ima\pi^{\ast}$ then $\mathfrak{q}\subseteq\bigcup\limits_{\mathfrak{p}\in E}\mathfrak{p}$.
Since $R$ is C.P., this implies $\mathfrak{q}\subseteq\mathfrak{p}$ for some $\mathfrak{p}\in E$.
It follows that $\mathfrak{q}\in E$, since each flat closed subset is stable under generalization.
Therefore $E = \Ima\pi^{\ast}$. \end{proof}

\section{Prime absorbance and P.Z. rings}

The dual notion of a C.P. ring can be defined as follows.
We say that a ring $R$ is a \emph{properly zipped} (or P.Z.) ring if whenever a prime ideal $\mathfrak{p}$ of $R$ contains the intersection of a family $\{ \mathfrak{p}_i \}$ of prime ideals of $R$, then $\mathfrak{p}_{i}\subseteq\mathfrak{p}$ for some $i$. 

We first give the analogue of Proposition \ref{prop:CPconstructions}, whose proof we leave as an exercise:

\begin{proposition} \label{prop:PZconstructions}
Let $R$ be a ring.
\begin{enumerate}[label=(\arabic*)]
    \item If $R/\mathfrak{N}$ is P.Z., then $R$ is P.Z.
    \item If $R$ is P.Z., then so is any quotient or localization of $R$.
    \item A finite product of P.Z. rings is P.Z.
\end{enumerate}
\end{proposition}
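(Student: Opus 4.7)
The plan is to dualize the proof of Proposition \ref{prop:CPconstructions}, exploiting in each case the standard correspondence between primes of the constructed ring and a distinguished subset of primes of $R$, together with the fact that ideal contraction commutes with arbitrary intersections.

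For (1), primes of $R$ correspond bijectively and order-preservingly to primes of $R/\mathfrak{N}$ via $\mathfrak{p} \mapsto \mathfrak{p}/\mathfrak{N}$, and since every prime contains $\mathfrak{N}$, an intersection $\bigcap_i \mathfrak{p}_i$ in $R$ descends to $\bigcap_i \mathfrak{p}_i/\mathfrak{N}$. Thus if $\mathfrak{p} \supseteq \bigcap_i \mathfrak{p}_i$, the same containment holds mod $\mathfrak{N}$, and P.Z. of $R/\mathfrak{N}$ furnishes some $\mathfrak{p}_i/\mathfrak{N} \subseteq \mathfrak{p}/\mathfrak{N}$, hence $\mathfrak{p}_i \subseteq \mathfrak{p}$.

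For (2), the quotient case is entirely analogous, using the correspondence between primes of $R/J$ and primes of $R$ containing $J$. For a localization $S^{-1}R$, primes correspond to primes of $R$ disjoint from $S$ via contraction $\mathfrak{q} \mapsto \pi^{-1}(\mathfrak{q})$; this correspondence preserves containment, and contraction commutes with arbitrary intersections. So the hypothesis transfers down to $R$, the P.Z. property of $R$ is invoked, and the conclusion transfers back up.

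For (3), write $R = R_1 \times \cdots \times R_n$. Every prime of $R$ has the form $R_1 \times \cdots \times \mathfrak{p}_k \times \cdots \times R_n$ for a unique coordinate $k$ and prime $\mathfrak{p}_k$ of $R_k$. Given a family $\{\mathfrak{q}_\alpha\}$ of primes of $R$ with $\bigcap_\alpha \mathfrak{q}_\alpha \subseteq \mathfrak{p}$, where $\mathfrak{p}$ is prime at coordinate $j$, partition the indices $\alpha$ by coordinate and compute the intersection component-wise. The subtle point is that if no $\mathfrak{q}_\alpha$ sits at coordinate $j$, then the $j$-th component of the intersection is all of $R_j$, which would force $R_j \subseteq \mathfrak{p}_j$ — equivalently, any $(0,\ldots,0,r_j,0,\ldots,0)$ with $r_j \in R_j \setminus \mathfrak{p}_j$ lies in $\bigcap_\alpha \mathfrak{q}_\alpha \setminus \mathfrak{p}$, a contradiction. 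So some $\mathfrak{q}_\alpha$ is at coordinate $j$, and the containment in $\mathfrak{p}$ amounts to $\bigcap_{\alpha:\,k_\alpha = j} \mathfrak{q}_\alpha^{(j)} \subseteq \mathfrak{p}_j$ in $R_j$; applying P.Z. of $R_j$ yields some $\mathfrak{q}_\alpha^{(j)} \subseteq \mathfrak{p}_j$, hence $\mathfrak{q}_\alpha \subseteq \mathfrak{p}$. The main obstacle is this bookkeeping in the product case; parts (1) and (2) reduce painlessly via the intersection-preserving spectrum correspondences.
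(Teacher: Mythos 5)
Your proof is correct, and it follows the intended route: the paper leaves this proposition as an exercise, with part (1) being the exact dual of the argument given for Proposition \ref{prop:CPconstructions}(1), and parts (2) and (3) resting on the standard inclusion-preserving spectrum correspondences (which commute with intersections), just as you use them. The coordinate bookkeeping in (3), including the observation that some $\mathfrak{q}_\alpha$ must sit at the same coordinate as $\mathfrak{p}$, is handled correctly.
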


Next, we turn towards a characterization of P.Z. rings.
The following result simplifies and improves on \cite[Theorem 4.2]{Tarizadeh}, with the addition of condition $\mathbf{(vii)}$.

\begin{theorem}\label{Theorem II} The following are equivalent for a ring $R$: 
\begin{enumerate}
\item $R$ is a P.Z. ring. 
\item If $\mathfrak{p}$ is a prime ideal of $R$, then there exists $f\in R$ such that $\Lambda(\mathfrak{p})=D(f)$. 
\item If $\mathfrak{p}$ is a prime ideal of $R$, then there exists $f\in R\setminus\mathfrak{p}$ such that the canonical map $R_{f}\rightarrow R_{\mathfrak{p}}$ is an isomorphism. 
\item If $\mathfrak{p}$ is a prime ideal of $R$, then the localization map $R\rightarrow R_{\mathfrak{p}}$ is of finite presentation. 
\item The Zariski opens of $\Spec(R)$ are stable under arbitrary intersections. 
\item $\Spec(R)$ is a Noetherian space in the flat topology. 
\item If a prime ideal $\mathfrak{p}$ of $R$ contains the intersection of a family $\{ I_k \}$ of radical ideals of $R$, then $I_{k}\subseteq\mathfrak{p}$ for some $k$. 
\end{enumerate}
\end{theorem}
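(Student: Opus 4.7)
The plan is to establish the cycle $\mathbf{(i)} \Rightarrow \mathbf{(v)} \Rightarrow \mathbf{(ii)} \Rightarrow \mathbf{(iii)} \Rightarrow \mathbf{(iv)} \Rightarrow \mathbf{(i)}$, then handle $\mathbf{(vi)}$ and $\mathbf{(vii)}$ as auxiliary equivalences. The recurring technical ingredient is the representation $\Lambda(\mathfrak{p}) = \bigcap_{s \in R \setminus \mathfrak{p}} D(s)$ as a filtered intersection of basic Zariski opens.

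For $\mathbf{(i)} \Rightarrow \mathbf{(v)}$, given Zariski closed sets $\{V(J_\alpha)\}$, set $C = \bigcup_\alpha V(J_\alpha)$ and $J = \bigcap_{\mathfrak{p}\in C} \mathfrak{p}$; for any $\mathfrak{q} \supseteq J$, P.Z. gives some $\mathfrak{p} \in C$ with $\mathfrak{p} \subseteq \mathfrak{q}$, and then $\mathfrak{p} \supseteq J_\alpha$ for some $\alpha$ forces $\mathfrak{q} \in V(J_\alpha) \subseteq C$, so $C = V(J)$ is Zariski closed. For $\mathbf{(v)} \Rightarrow \mathbf{(ii)}$, under $\mathbf{(v)}$ the intersection $\Lambda(\mathfrak{p})$ is Zariski open, hence equals $\bigcup_i D(f_i)$; taking $f = f_i$ with $\mathfrak{p} \in D(f_i)$ (so $f \notin \mathfrak{p}$), the inclusion $D(f) \subseteq \Lambda(\mathfrak{p})$ is automatic and $\Lambda(\mathfrak{p}) \subseteq D(f)$ follows directly from the filtered-intersection description. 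For $\mathbf{(ii)} \Rightarrow \mathbf{(iii)}$, note that $\mathfrak{q} \subseteq \mathfrak{p}$ and $g \notin \mathfrak{p}$ force $g \notin \mathfrak{q}$, so $D(f) = \Lambda(\mathfrak{p}) \subseteq D(g)$ for every $g \in R \setminus \mathfrak{p}$; this makes each such $g$ a unit in $R_f$, and the universal property of $R_\mathfrak{p}$ produces the inverse of the canonical map $R_f \to R_\mathfrak{p}$. Then $\mathbf{(iii)} \Rightarrow \mathbf{(iv)}$ is the presentation $R_f \cong R[T]/(fT - 1)$. Finally, for $\mathbf{(iv)} \Rightarrow \mathbf{(i)}$, finite type forces $R_\mathfrak{p}$ to be generated as an $R$-algebra by $a_1/s_1, \dots, a_n/s_n$, giving $R_\mathfrak{p} = R_f$ for $f = s_1 \cdots s_n$, which yields $\mathbf{(ii)}$; then P.Z. follows by contradiction, since if $\bigcap_\alpha \mathfrak{p}_\alpha \subseteq \mathfrak{p}$ with no $\mathfrak{p}_\alpha \subseteq \mathfrak{p}$, each $\mathfrak{p}_\alpha \notin D(f) = \Lambda(\mathfrak{p})$ gives $f \in \mathfrak{p}_\alpha$, forcing $f \in \bigcap_\alpha \mathfrak{p}_\alpha \subseteq \mathfrak{p}$ and contradicting $f \notin \mathfrak{p}$.

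The equivalence $\mathbf{(i)} \Leftrightarrow \mathbf{(vii)}$ is the absorbance analogue of Theorem \ref{thm:intersectionAvoidance}: $\mathbf{(vii)} \Rightarrow \mathbf{(i)}$ is trivial, and for $\mathbf{(i)} \Rightarrow \mathbf{(vii)}$ we write each radical $I_k = \bigcap_{\mathfrak{q} \in \Min(I_k)} \mathfrak{q}$, so $\bigcap_k I_k \subseteq \mathfrak{p}$ becomes a containment of an intersection of primes; P.Z. produces some $\mathfrak{q} \in \Min(I_k)$ with $\mathfrak{q} \subseteq \mathfrak{p}$, whence $I_k \subseteq \mathfrak{q} \subseteq \mathfrak{p}$. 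For $\mathbf{(vi)}$, the direction $\mathbf{(vi)} \Rightarrow \mathbf{(ii)}$ is clean: the filtered family $\{D(s) : s \in R \setminus \mathfrak{p}\}$ of flat-closed sets must have a minimal member $D(s_0)$ by flat Noetherianity, giving $\Lambda(\mathfrak{p}) = D(s_0)$. Conversely, under $\mathbf{(v)}$ every flat-closed set (being an intersection of Zariski opens) is Zariski open, so flat Noetherianity reduces to ACC on Zariski closed sets; this can be extracted by using $\mathbf{(iii)}$ and Zariski quasi-compactness, which force $\Spec R = \Lambda(\mathfrak{m}_1) \cup \cdots \cup \Lambda(\mathfrak{m}_n)$ for finitely many maximal ideals, yielding the needed finiteness.

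The main obstacle is the direction $\mathbf{(v)} \Rightarrow \mathbf{(vi)}$: one has to promote the purely topological hypothesis (stability of Zariski opens under arbitrary intersection) to a genuine chain condition on flat-closed sets. The trick is to route through the already-established equivalence with $\mathbf{(iii)}$ so that each $\Lambda(\mathfrak{p}) = D(f_\mathfrak{p})$ becomes a basic open, and then use Zariski quasi-compactness to extract semilocality and a Noetherian stratification of $\Spec R$ by these sets.
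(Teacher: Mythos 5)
Several of your individual steps are correct, and some are genuinely different from (and arguably cleaner than) the paper's: your direct proof of $\mathbf{(i)}\Rightarrow\mathbf{(v)}$ via $C=V\bigl(\bigcap_{\mathfrak{p}\in C}\mathfrak{p}\bigr)$, your derivation of $\mathbf{(vii)}$ by writing each radical ideal as an intersection of primes (the absorbance dual of Theorem \ref{thm:intersectionAvoidance}, versus the paper's argument through $\mathbf{(ii)}$ and multiplicative sets), and your extraction of $\Lambda(\mathfrak{p})=D(s_0)$ from a minimal member of the filtered family $\{D(s):s\in R\setminus\mathfrak{p}\}$ in $\mathbf{(vi)}\Rightarrow\mathbf{(ii)}$. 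However, two links in your chain are broken.

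First, $\mathbf{(iv)}\Rightarrow\mathbf{(i)}$: the claim that finite type forces $R_{\mathfrak{p}}=R_{f}$ for $f=s_{1}\cdots s_{n}$ is false. From generators $a_{i}/s_{i}$ you only get that $R_{f}\rightarrow R_{\mathfrak{p}}$ is \emph{surjective}, and a surjective localization map need not be injective; in particular the inclusion $D(f)\subseteq\Lambda(\mathfrak{p})$, which your closing contradiction argument requires, does not follow. Concretely, let $R=k^{\mathbb{N}}$ and let $\mathfrak{p}$ be the prime attached to a non-principal ultrafilter: then $R\rightarrow R_{\mathfrak{p}}$ is surjective, hence of finite type with all $s_{i}=1$, yet $\Lambda(\mathfrak{p})=\{\mathfrak{p}\}$ is not equal to $D(g)$ for any $g$. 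So any correct proof of $\mathbf{(iv)}\Rightarrow\mathbf{(ii)}$ must use the relations, i.e.\ finite \emph{presentation}; the paper does so by invoking the theorem that a flat, finitely presented ring map is open on spectra, whence $\Lambda(\mathfrak{p})$ is Zariski open and equals some $D(f)$ containing $\mathfrak{p}$. As written, your cycle $\mathbf{(i)}\Rightarrow\mathbf{(v)}\Rightarrow\mathbf{(ii)}\Rightarrow\mathbf{(iii)}\Rightarrow\mathbf{(iv)}\Rightarrow\mathbf{(i)}$ fails at its last edge.

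Second, the forward direction for $\mathbf{(vi)}$ is a sketch that cannot be completed along the lines indicated. Reducing flat Noetherianity to ``ACC on Zariski closed sets'' proves too much: that condition fails for P.Z.\ rings with infinite spectrum (in the $1$-dimensional local P.Z.\ ring of Proposition \ref{Prop Hochster} applied to $\mathbb{Z}$, the sets $V(\mathfrak{p}_{1})\cup\cdots\cup V(\mathfrak{p}_{n})=\{\mathfrak{p}_{1},\ldots,\mathfrak{p}_{n},\mathfrak{m}\}$ form a strictly increasing infinite chain of Zariski closed sets), so no valid argument can pass through it. Likewise, the covering $\Spec(R)=\Lambda(\mathfrak{m}_{1})\cup\cdots\cup\Lambda(\mathfrak{m}_{n})$ only reduces the problem to showing that each local P.Z.\ ring is flat Noetherian, which is the original question. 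The paper's route is different and does close: by $\mathbf{(v)}$ every flat open $\bigcup_{k}V(I_{k})$ equals $V(I)$ for a single ideal $I$, and $V(I)$ is always quasi-compact in the flat topology, so every flat open is quasi-compact, which is equivalent to flat Noetherianity. You would need this (or some substitute for it) to establish $\mathbf{(vi)}$.
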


\begin{proof} $\mathbf{(i)}\Rightarrow\mathbf{(ii)}:$ If $X := \Spec(R) \setminus \Lambda(\mathfrak{p})$, then $I := \bigcap\limits_{\mathfrak{q}\in X} \mathfrak{q}$ is not contained in $\mathfrak{p}$.
Thus there exists $f\in I \setminus \mathfrak{p}$.
It follows that $\Lambda(\mathfrak{p}) = D(f)$. 

$\mathbf{(ii)}\Rightarrow\mathbf{(iii)}:$ If $g\in R\setminus\mathfrak{p}$ then $g/1$ is invertible in $R_{f}$, since $\Lambda(\mathfrak{p}) = D(f)$.
Thus by the universal property of localization, there exists a (unique) ring map $\phi:R_{\mathfrak{p}}\rightarrow R_{f}$ such that $\pi_{f}=\phi\circ\pi_{\mathfrak{p}}$ where $\pi_{f}:R_{f}\rightarrow R_{\mathfrak{p}}$ and $\pi_{\mathfrak{p}}:R\rightarrow R_{\mathfrak{p}}$ are the canonical maps.
It follows that $\phi\circ\pi_{f}$ and $\pi_{f}\circ\phi$ are the identity maps. 

$\mathbf{(iii)}\Rightarrow\mathbf{(iv)}:$ The isomorphism $R_{f} \cong R[X]/(fX-1)$ gives that $R\rightarrow R_{\mathfrak{p}}$ is of finite presentation. 

$\mathbf{(iv)}\Rightarrow\mathbf{(ii)}:$ It is well known that every flat ring map which is also of finite presentation induces a Zariski open map on prime spectra.
In particular, $\Lambda(\mathfrak{p})$ is a Zariski open of $\Spec(R)$.
Thus we may write $\Lambda(\mathfrak{p})=\bigcup\limits_{i}D(f_{i})$.
Therefore $\mathfrak{p}\in D(f_{k})$ for some $k$, and so $\Lambda(\mathfrak{p})=D(f_{k})$. 

$\mathbf{(ii)}\Rightarrow\mathbf{(v)}:$ It suffices to prove the assertion for basic Zariski opens.
If $\mathfrak{p}\in\bigcap\limits_{i}D(f_{i})$ then there exists $f\in R$ such that $\Lambda(\mathfrak{p})=D(f)$.
It follows that $\mathfrak{p}\in D(f)\subseteq\bigcap\limits_{i}D(f_{i})$. 

$\mathbf{(v)}\Rightarrow\mathbf{(vi)}:$ It suffices to show that every flat open $U$ of $\Spec(R)$ is quasi-compact.
If $U=\bigcup\limits_{k}V(I_{k})$ where each $I_{k}$ is a finitely generated ideal of $R$, then by hypothesis, $U=V(I)$ for some ideal $I$ of $R$.
But for any ring $R$, $V(I)$ is a quasi-compact subset of $\Spec(R)$ in the flat topology.

$\mathbf{(vi)}\Rightarrow\mathbf{(ii)}:$ Since $\Lambda(\mathfrak{p})$ is a flat closed subset of $\Spec(R)$, there exists a finitely generated ideal $I$ of $R$ such that $\Spec(R) \setminus \Lambda(\mathfrak{p}) = V(I)$, because every subspace of a Noetherian space is quasi-compact.
Thus $\Lambda(\mathfrak{p})=D(f)$ for some $f\in I$. 

$\mathbf{(ii)}\Rightarrow\mathbf{(vii)}:$ Choose $f\in R$ such that $\Lambda(\mathfrak{p})=D(f)$.
Then $f\notin I_{k}$ for some $k$, so $I_{k}\cap S=\emptyset$ where $S=\{1,f,f^{2},\ldots\}$. Hence there exists a prime ideal $\mathfrak{q}$ of $R$ such that $I_{k}\subseteq\mathfrak{q}$ and $f\notin\mathfrak{q}$. Thus $I_{k}\subseteq\mathfrak{q}\subseteq\mathfrak{p}$.  

$\mathbf{(vii)}\Rightarrow\mathbf{(i)}:$ Clear. \end{proof}

The next two corollaries codify important properties of P.Z. rings, and serve as a dual to Remark \ref{rem:CPimpliesNoetherianSpectrum}(1). 

\begin{corollary} Every P.Z. ring satisfies the descending chain condition on both prime ideals and finitely generated radical ideals. 
\end{corollary}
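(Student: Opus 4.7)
The plan is to use condition $\mathbf{(vi)}$ of Theorem \ref{Theorem II}, namely that $\Spec(R)$ is Noetherian in the flat topology, to reduce both chain conditions to the ascending/descending chain conditions on open/closed subsets of a Noetherian space. The key observations are that $\Lambda(\mathfrak{p})$ is flat-closed for any prime $\mathfrak{p}$ (as stated in the introduction), that $V(I)$ is flat-open for any finitely generated $I$ (by definition of the flat topology), and that the assignments $\mathfrak{p} \mapsto \Lambda(\mathfrak{p})$ and $I \mapsto V(I)$ are sufficiently injective to recover chains of ideals from chains of subspaces.

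For d.c.c. on prime ideals, I would start with a descending chain $\mathfrak{p}_1 \supseteq \mathfrak{p}_2 \supseteq \cdots$ of primes and take flat closures to obtain a descending chain $\Lambda(\mathfrak{p}_1) \supseteq \Lambda(\mathfrak{p}_2) \supseteq \cdots$ of flat-closed subsets, which by $\mathbf{(vi)}$ must stabilize at some index $N$. For $n \geq N$, the containment $\mathfrak{p}_n \subseteq \mathfrak{p}_N$ is immediate from the chain, while the reverse containment follows from $\mathfrak{p}_N \in \Lambda(\mathfrak{p}_N) = \Lambda(\mathfrak{p}_n)$, so the two primes must coincide.

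For d.c.c. on finitely generated radical ideals, I would take a descending chain $I_1 \supseteq I_2 \supseteq \cdots$ of such ideals; then $V(I_1) \subseteq V(I_2) \subseteq \cdots$ is an ascending chain of flat-open subsets (finitely generated is essential here), which stabilizes by $\mathbf{(vi)}$. Finally, since each $I_k$ is radical, one recovers $I_k = \bigcap_{\mathfrak{p} \in V(I_k)} \mathfrak{p}$, so equality of the $V(I_k)$ forces equality of the $I_k$.

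I do not anticipate a serious obstacle here; the argument is essentially a dictionary translation, and the only subtlety is keeping the directions straight (a descending chain of ideals corresponds to an ascending chain of closed subsets in the Zariski picture, but in the flat topology the roles of $V(\cdot)$ and $\Lambda(\cdot)$ as ``closed'' or ``open'' have to be used correctly). The finitely generated hypothesis in the second statement cannot be dropped because only finitely generated ideals correspond to flat-open $V(I)$'s, which is exactly what the base of the flat topology provides.
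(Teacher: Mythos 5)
Your proposal is correct and follows exactly the paper's own argument: apply Theorem \ref{Theorem II}$\mathbf{(vi)}$, convert the chain of primes into a descending chain of flat-closed sets $\Lambda(\mathfrak{p}_k)$ and the chain of finitely generated radical ideals into an ascending chain of flat-open sets $V(I_k)$, and invoke Noetherianness of the flat topology. You even supply the small recovery steps ($\mathfrak{p}_N \in \Lambda(\mathfrak{p}_n)$ and $I_k = \bigcap_{\mathfrak{p} \in V(I_k)} \mathfrak{p}$) that the paper leaves implicit.
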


\begin{proof} Let $R$ be a P.Z. ring. If $\mathfrak{p}_{1}\supseteq\mathfrak{p}_{2}\supseteq\ldots$ is a descending chain of prime ideals of $R$, then $\Lambda(\mathfrak{p}_{1})\supseteq\Lambda(\mathfrak{p}_{2})
\supseteq\ldots$ is a descending chain of flat closed subsets of $\Spec(R)$.
By Theorem \ref{Theorem II}$\mathbf{(vi)}$, there is some $n$ such that $\mathfrak{p}_{n}=\mathfrak{p}_{i}$ for all $i\geqslant n$.

Similarly, if $I_{1}\supseteq I_{2}\supseteq\ldots$ is a descending chain of finitely generated radical ideals of $R$, then $V(I_{1})\subseteq V(I_{2})\subseteq\ldots$ is an ascending chain of flat open subsets of $\Spec(R)$.
Again by Theorem \ref{Theorem II}$\mathbf{(vi)}$, there is some $n$ such that $I_{n}=I_{k}$ for all $k\geqslant n$. \end{proof}

\begin{corollary}\label{Corollary VI max finite} Let $R$ be a P.Z. ring.
Then $\Max(R)$ is a finite set. 
\end{corollary}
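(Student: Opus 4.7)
The plan is to combine the characterization $\Lambda(\mathfrak{p}) = D(f)$ from Theorem \ref{Theorem II}$\mathbf{(ii)}$ with the quasi-compactness of $\Spec(R)$ in the Zariski topology, which holds for any ring and is not specific to the P.Z. hypothesis.

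First, for every maximal ideal $\mathfrak{m}$ of $R$, I would invoke Theorem \ref{Theorem II}$\mathbf{(ii)}$ to choose $f_\mathfrak{m} \in R$ with $\Lambda(\mathfrak{m}) = D(f_\mathfrak{m})$. Next, observe that $\Spec(R) = \bigcup_{\mathfrak{m} \in \Max(R)} \Lambda(\mathfrak{m})$, simply because every prime is contained in at least one maximal ideal. Substituting the previous step, this produces a cover of $\Spec(R)$ by basic Zariski opens:
\[
\Spec(R) = \bigcup_{\mathfrak{m} \in \Max(R)} D(f_\mathfrak{m}).
\]

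Since $\Spec(R)$ is quasi-compact in the Zariski topology, this cover admits a finite subcover, say $\Spec(R) = D(f_{\mathfrak{m}_1}) \cup \cdots \cup D(f_{\mathfrak{m}_n})$. Now let $\mathfrak{n}$ be any maximal ideal of $R$. Since $\mathfrak{n} \in \Spec(R)$, it belongs to some $D(f_{\mathfrak{m}_i}) = \Lambda(\mathfrak{m}_i)$, so $\mathfrak{n} \subseteq \mathfrak{m}_i$, and maximality forces $\mathfrak{n} = \mathfrak{m}_i$. Hence $\Max(R) \subseteq \{\mathfrak{m}_1, \ldots, \mathfrak{m}_n\}$, proving finiteness.

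There is really no obstacle once Theorem \ref{Theorem II} is in hand: the whole argument is a two-line application of quasi-compactness after rewriting $\Lambda(\mathfrak{m})$ as a basic open. The only mild subtlety is remembering that one uses the Zariski topology (where $D(f)$ is open and $\Spec(R)$ is quasi-compact) rather than the flat topology, even though Noetherianness of the flat topology from Theorem \ref{Theorem II}$\mathbf{(vi)}$ is what ultimately underlies the P.Z. hypothesis here.
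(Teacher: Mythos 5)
Your proof is correct and follows essentially the same route as the paper: both invoke Theorem \ref{Theorem II}$\mathbf{(ii)}$ to write each $\Lambda(\mathfrak{m})$ as a basic Zariski open $D(f_{\mathfrak{m}})$ and then extract a finite subcover by quasi-compactness. The only (immaterial) difference is that you cover all of $\Spec(R)$ and use its quasi-compactness, whereas the paper covers $\Max(R)$ and uses the quasi-compactness of $\Max(R)$; your variant is if anything slightly more self-contained.
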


\begin{proof} By Theorem \ref{Theorem II}$\mathbf{(ii)}$, for every maximal ideal $\mathfrak{m}$ of $R$, there exists $x_{\mathfrak{m}}\in R$ such that $\Lambda(\mathfrak{m})=D(x_{\mathfrak{m}})$.
Since $\Max(R)$ is always Zariski quasi-compact, there is a finite subcover $\Max(R)\subseteq\bigcup\limits_{i=1}^{d}D(x_{i})$ where $x_{i}:=x_{\mathfrak{m}_{i}}$ for $i = 1, \ldots, d$.
Then $\Max(R)=\{\mathfrak{m}_{1},\ldots,\mathfrak{m}_{d}\}$. \end{proof}

\begin{proposition}\label{prop:flat P.Z.} For a P.Z. ring $R$, the flat opens of $\Spec(R)$ are precisely sets of the form $V(I)$ with $I$ a finitely generated ideal of $R$. 
\end{proposition}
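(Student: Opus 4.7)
The plan is to prove both inclusions. One direction is immediate from the definition of the flat topology, since by construction every set of the form $V(I)$ with $I$ finitely generated is a basic flat open. So the substantive content is the converse: every flat open of $\Spec(R)$ has the form $V(I)$ for some finitely generated $I$.

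For the converse, I would first fix a flat open $U \subseteq \Spec(R)$. By definition of the flat topology, $U$ can be written as a union $U = \bigcup_{k \in K} V(I_k)$ where each $I_k$ is a finitely generated ideal of $R$. Since $R$ is P.Z., Theorem \ref{Theorem II}$\mathbf{(vi)}$ tells us that $\Spec(R)$ is a Noetherian space in the flat topology, and in particular every flat open is quasi-compact. Hence the cover $U = \bigcup_{k \in K} V(I_k)$ admits a finite subcover, so we may assume $U = V(I_1) \cup \cdots \cup V(I_n)$ with each $I_j$ finitely generated.

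Next I would reduce a finite union of such sets to a single $V$ by the identity $V(J_1) \cup V(J_2) = V(J_1 J_2)$, which holds for any two ideals (the nontrivial inclusion uses primality: if $\mathfrak{p} \supseteq J_1 J_2$, then $\mathfrak{p} \supseteq J_1$ or $\mathfrak{p} \supseteq J_2$). Iterating, $U = V(I_1 I_2 \cdots I_n)$, and the product $I_1 I_2 \cdots I_n$ is finitely generated because each $I_j$ is. This expresses $U$ in the required form.

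I do not expect any serious obstacle: the only real input is the Noetherianness of $\Spec(R)$ in the flat topology from Theorem \ref{Theorem II}$\mathbf{(vi)}$, and the rest is formal. The most delicate point to state carefully is that one does actually have quasi-compactness of arbitrary flat opens, not just of $\Spec(R)$ itself, which is exactly what Noetherianness of the flat topology delivers.
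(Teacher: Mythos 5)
Your proposal is correct and follows essentially the same route as the paper: both use Theorem \ref{Theorem II}$\mathbf{(vi)}$ to get quasi-compactness of the flat open $U$, extract a finite subcover by basic opens $V(I_k)$, and combine them into a single $V(I)$ with $I$ finitely generated. You merely spell out the final combination step (via $V(J_1)\cup V(J_2)=V(J_1J_2)$) that the paper leaves implicit.
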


\begin{proof} Clearly every subset of the given form is flat open.
Conversely, let $U$ be a flat open subset of $\Spec(R)$. Then by Theorem \ref{Theorem II}$\mathbf{(vi)}$, $U$ is quasi-compact in the flat topology.
Thus there exists a finitely generated ideal $I$ of $R$ such that $U = V(I)$. \end{proof}

We next characterize the zero-dimensional C.P. and P.Z. rings:

\begin{theorem} \label{prop:zerodimCP} The following are equivalent for a ring $R$: 
\begin{enumerate}
\item $R$ is a zero-dimensional C.P. ring. 
\item $R$ is a zero-dimensional P.Z. ring. 
\item $R/\mathfrak{N}$ is a finite product of fields. 
\end{enumerate}
\end{theorem}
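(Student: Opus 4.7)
The plan is to show that each of (i) and (ii) forces $\Spec(R)$ to be finite, and then to deduce (iii) from finite spectrum together with zero-dimensionality via the Chinese Remainder Theorem. In the other direction, (iii) trivially implies both (i) and (ii) since a ring with finite spectrum automatically satisfies avoidance and absorbance for all families of primes (these reduce to the classical finite cases).

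For the implication $\mathbf{(iii)} \Rightarrow \mathbf{(i)}, \mathbf{(ii)}$: if $R/\mathfrak{N} \cong k_1 \times \cdots \times k_n$ with $k_i$ fields, then $\Spec(R) = \Spec(R/\mathfrak{N})$ consists of exactly $n$ maximal ideals, so $\dim R = 0$ and every family of primes involved is effectively finite; classical prime avoidance and the definition of a prime immediately give C.P. and P.Z.

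For $\mathbf{(i)} \Rightarrow \mathbf{(iii)}$ and $\mathbf{(ii)} \Rightarrow \mathbf{(iii)}$: in the C.P. case, Remark \ref{rem:CPimpliesNoetherianSpectrum}(1) gives that $R$ has only finitely many minimal primes, and since $\dim R = 0$ every prime is minimal, so $\Spec R$ is finite. In the P.Z. case, Corollary \ref{Corollary VI max finite} gives that $\Max(R)$ is finite, and again $\dim R = 0$ forces $\Spec R = \Max R$ to be finite. In both cases $\Spec R$ consists of finitely many maximal ideals $\mathfrak{m}_1, \ldots, \mathfrak{m}_n$, which are pairwise comaximal, so the Chinese Remainder Theorem applied to the reduced ring $R/\mathfrak{N} = R/\bigcap_i \mathfrak{m}_i$ yields
\[
R/\mathfrak{N} \;\cong\; \prod_{i=1}^n R/\mathfrak{m}_i,
\]
a finite product of fields.

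I do not anticipate a real obstacle: Section 3 and Section 4 already isolate the substantive content (finiteness of $\Min R$ for C.P. rings and of $\Max R$ for P.Z. rings), and the passage to $R/\mathfrak{N}$ via CRT is routine. The only care needed is to observe that zero-dimensionality bridges ``few minimal primes'' (from C.P.) and ``few maximal primes'' (from P.Z.) into the single statement ``finite spectrum,'' which is what makes (i) and (ii) collapse to the same conclusion.
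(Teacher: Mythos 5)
Your proof is correct and follows essentially the same route as the paper: zero-dimensionality identifies $\Min(R)$, $\Max(R)$, and $\Spec(R)$, finiteness comes from Remark \ref{rem:CPimpliesNoetherianSpectrum}(1) in the C.P.\ case and Corollary \ref{Corollary VI max finite} in the P.Z.\ case, and the Chinese Remainder Theorem finishes $\mathbf{(i)},\mathbf{(ii)}\Rightarrow\mathbf{(iii)}$. The only (immaterial) difference is in the converse: you argue directly that a finite spectrum makes every family of primes effectively finite, whereas the paper deduces it from the facts that fields are C.P.\ and P.Z.\ and that these properties pass through finite products and modding out nilpotents (Propositions \ref{prop:CPconstructions} and \ref{prop:PZconstructions}); both are valid.
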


\begin{proof} Suppose $\dim R = 0$. Then $\Min(R) = \Max(R) = \Spec(R)$, so if $R$ is C.P. (resp. P.Z.), then $\Spec(R)$ is finite by Remark \ref{rem:CPimpliesNoetherianSpectrum}(1) (resp. Corollary \ref{Corollary VI max finite}), say $\Spec(R)=\{\mathfrak{p}_{1}, \ldots ,\mathfrak{p}_{n}\}$. Thus by the Chinese remainder theorem, $R/\mathfrak{N} \cong R/\mathfrak{p}_1 \times \ldots \times R/\mathfrak{p}_n$ is a finite product of fields. This shows $\mathbf{(i)} \Rightarrow \mathbf{(iii)}$ (resp. $\mathbf{(ii)} \Rightarrow \mathbf{(iii)}$).

On the other hand, a field is both C.P. and P.Z., so the converse follows from Propositions \ref{prop:CPconstructions} and \ref{prop:PZconstructions}. \end{proof}
% $\mathbf{(i)}\Rightarrow\mathbf{(ii)}:$ By Theorem \ref{Theorem iv 2020} \mathbf{(v)}, $\Spec(R)$ is Noetherian in the Zariski topology. It is well known (and easy to check) that a ring $R$ is zero dimensional iff the Zariski and flat topologies over $\Spec(R)$ are the same. Thus $\Spec(R)$ is Noetherian in the flat topology. So by Theorem \ref{Theorem II} \mathbf{(vi)}, $R$ is a P.Z. ring. 
% $\mathbf{(ii)}\Rightarrow\mathbf{(iii)}:$ By Corollary \ref{Corollary VI max finite}, $\Spec(R)=\{\mathfrak{p}_{1},...,\mathfrak{p}_{n}\}$ is a finite set. Thus by the Chinese remainder theorem, $R/\mathfrak{N}\simeq R/\mathfrak{p}_{1}\times...\times R/\mathfrak{p}_{n}$. 
% %Remark \ref{rem:CPimpliesNoetherianSpectrum}(1), $\Spec(R)$ is a finite set, so by the Chinese Remainder theorem, $R/\mathfrak{N}$ is a finite product of fields. 
% %$\mathbf{(ii)}\Rightarrow\mathbf{(iii)}:$ Similarly, Corollary \ref{Corollary VI max finite} implies $\Spec(R)$ is a finite set, so again $R/\mathfrak{N}$ is a finite product of fields. 
% $\mathbf{(iii)}\Rightarrow\mathbf{(i)}:$ Clearly $R/\mathfrak{N}$ and so $R$ are zero dimensional, since every product of fields is a zero dimensional ring. Also $\Spec(R)$ is a finite set and so by the prime avoidance lemma, $R$ is a C.P. ring. \end{proof}

\begin{corollary}\label{Corollary II} Let $R$ be a Boolean ring.
Then $R$ is C.P. if and only if $R$ is P.Z. if and only if $R$ is finite.
In particular, for a set $X$, the power set ring $\mathcal{P}(X)$ is C.P. or P.Z. if and only if $X$ is finite. 
\end{corollary}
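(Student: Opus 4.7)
The plan is to derive this as a direct consequence of Theorem \ref{prop:zerodimCP}, after making two small observations about Boolean rings.

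First I would recall that a Boolean ring $R$ satisfies $x^2 = x$ for all $x \in R$, which forces $\mathfrak{N} = 0$ (so $R/\mathfrak{N} = R$) and also forces every prime ideal to be maximal (since $R/\mathfrak{p}$ is an integral Boolean ring, hence $\mathbb{F}_2$). In particular $\dim R = 0$. So the hypothesis of Theorem \ref{prop:zerodimCP} is automatically satisfied, which immediately gives the equivalence of C.P. and P.Z. and reduces the problem to characterizing when $R = R/\mathfrak{N}$ is a finite product of fields.

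Next I would observe that if $R \cong K_1 \times \cdots \times K_n$ is a finite product of fields, then each $K_i$ is a Boolean ring (being a quotient of $R$), hence $K_i \cong \mathbb{F}_2$; so $R \cong \mathbb{F}_2^n$ is finite. Conversely, any finite ring has finite spectrum and is trivially both C.P. and P.Z. (every family of prime ideals is finite, so classical prime avoidance and prime absorbance apply). This establishes the chain: $R$ is C.P. iff $R$ is P.Z. iff $R$ is a finite product of fields iff $R$ is finite.

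For the "In particular" statement, I would just note that $\mathcal{P}(X)$ is a Boolean ring of cardinality $2^{|X|}$, hence finite exactly when $X$ is. I do not anticipate any real obstacle; the entire argument is essentially bookkeeping on top of Theorem \ref{prop:zerodimCP}, and the only non-immediate step is the tiny remark that a Boolean field must be $\mathbb{F}_2$.
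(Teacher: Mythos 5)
Your proposal is correct and follows essentially the same route as the paper: reduce to Theorem \ref{prop:zerodimCP} by noting a Boolean ring is reduced and zero-dimensional, then observe that every residue field (hence every factor in the finite product) is $\mathbb{F}_2$, so a finite product of fields forces finiteness of $R$. The paper's proof is just a terser version of the same argument.
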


\begin{proof} By Theorem \ref{prop:zerodimCP} it suffices to show that a Boolean P.Z. ring is finite.
This follows from the proof of Theorem \ref{prop:zerodimCP}$\mathbf{(ii)}\Rightarrow\mathbf{(iii)}$, since every residue field of a Boolean ring is $\mathbb{Z}/2\mathbb{Z}$. \end{proof}

We can now characterize the rings which satisfy both avoidance and absorbance.
The proof relies significantly on topological methods.

\begin{theorem}\label{Thm c.p.+P.Z.=finite primes} The following are equivalent for a ring $R$: 
\begin{enumerate}
\item $R$ is both C.P. and P.Z. 
\item $\Spec(R)$ is Noetherian in both the Zariski and flat topologies. 
\item $\Spec(R)$ is a finite set.
\end{enumerate}
\end{theorem}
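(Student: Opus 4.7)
The three easy implications come first. For $\mathbf{(iii)} \Rightarrow \mathbf{(ii)}$, any finite topological space is Noetherian in any topology. For $\mathbf{(iii)} \Rightarrow \mathbf{(i)}$, on a finite spectrum the classical prime avoidance lemma (Example \ref{ex:avoidance}(2)) and the defining property of primeness give C.P.\ and P.Z.\ respectively. For $\mathbf{(i)} \Rightarrow \mathbf{(ii)}$, Remark \ref{rem:CPimpliesNoetherianSpectrum}(1) shows that C.P.\ implies Zariski-Noetherian spectrum, while Theorem \ref{Theorem II}$\mathbf{(i)}\Leftrightarrow\mathbf{(vi)}$ shows that P.Z.\ implies flat-Noetherian spectrum.

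The content lies in $\mathbf{(ii)} \Rightarrow \mathbf{(iii)}$. My first step is to observe that, under the combined hypotheses, the Zariski and flat topologies become complementary in a precise sense: every Zariski closed set coincides with a flat open set, and vice versa. Indeed, Zariski-Noetherianness together with Proposition \ref{prop:noetherianZariskiTop}$\mathbf{(iii)}$ shows that every Zariski closed set is of the form $V(J)$ with $J$ finitely generated, hence is a basic flat open. Conversely, flat-Noetherianness forces $R$ to be P.Z.\ (Theorem \ref{Theorem II}), and then Proposition \ref{prop:flat P.Z.} writes every flat open as some $V(I)$ with $I$ finitely generated, hence Zariski closed. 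By complementation, flat closed sets coincide with Zariski open sets.

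With this symmetry in hand, I would pass to the patch (constructible) topology on $\Spec(R)$, which is generated by the Zariski opens together with the flat opens. By the previous paragraph, every Zariski open set and every Zariski closed set is patch open. Combining this with Theorem \ref{Theorem II}$\mathbf{(ii)}$, which writes $\Lambda(\mathfrak{p}) = D(f_{\mathfrak{p}})$ for every prime $\mathfrak{p}$, yields
\[
\{\mathfrak{p}\} = V(\mathfrak{p}) \cap \Lambda(\mathfrak{p}) = V(\mathfrak{p}) \cap D(f_{\mathfrak{p}}),
\]
exhibiting each singleton as patch open. Since the patch topology on the prime spectrum of any ring is quasi-compact (a standard fact from spectral-space theory due to Hochster), the open cover of $\Spec(R)$ by these singletons admits a finite subcover, forcing $\Spec(R)$ to be finite.

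The step I expect to demand the most care is the appeal to patch quasi-compactness, which sits slightly outside the paper's self-contained development of Zariski and flat topologies. One alternative would be an induction on Krull dimension, with Theorem \ref{prop:zerodimCP} supplying the base case, but this would first require showing that the hypotheses already force $\dim R < \infty$; the conjunction of ACC on radical ideals and DCC on primes does not by itself bound chain lengths globally, so I expect the patch-compactness route to be strictly cleaner and would simply cite the spectral-space fact.
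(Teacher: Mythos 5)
Your proof is correct, and for the one implication with real content it takes a genuinely different route from the paper. The paper disposes of $\mathbf{(ii)}\Rightarrow\mathbf{(iii)}$ by citing \cite[Theorem 4.5]{Tarizadeh}, whereas you give an actual argument: Zariski--Noetherianness plus Proposition \ref{prop:noetherianZariskiTop} makes every Zariski closed set a basic flat open $V(J)$ with $J$ finitely generated, flat--Noetherianness gives the P.Z. property and hence $\Lambda(\mathfrak{p})=D(f_{\mathfrak{p}})$, so each singleton $\{\mathfrak{p}\}=V(\mathfrak{p})\cap D(f_{\mathfrak{p}})$ is open in the patch (constructible) topology, and quasi-compactness of the patch topology finishes the job. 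All the steps check out: the topology generated by the Zariski and flat opens is indeed the patch topology (it contains every $D(f)$ and every $V(f)$, and conversely), and patch quasi-compactness of $\Spec(R)$ for an arbitrary commutative ring is the standard Hochster/Stone-space fact. What your approach buys is self-containedness modulo that one classical input, at the cost of importing a third topology the paper never introduces; what the paper's approach buys is brevity, at the cost of outsourcing the entire content of the implication to an external reference. Your remaining implications ($\mathbf{(iii)}\Rightarrow\mathbf{(ii)}$, $\mathbf{(iii)}\Rightarrow\mathbf{(i)}$, $\mathbf{(i)}\Rightarrow\mathbf{(ii)}$) match the paper's citations of Theorem \ref{Theorem iv 2020}$\mathbf{(v)}$ and Theorem \ref{Theorem II}$\mathbf{(vi)}$, and the cycle $\mathbf{(i)}\Rightarrow\mathbf{(ii)}\Rightarrow\mathbf{(iii)}\Rightarrow\mathbf{(i)}$ suffices for the equivalence. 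Your closing remark is also well judged: the dimension-induction alternative would indeed stall on bounding $\dim R$, so the patch-compactness route is the right one.
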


\begin{proof} $\mathbf{(i)}\Rightarrow\mathbf{(ii)}:$ See Theorem \ref{Theorem iv 2020}$\mathbf{(v)}$ and Theorem \ref{Theorem II}$\mathbf{(vi)}$.

$\mathbf{(ii)}\Rightarrow\mathbf{(iii)}:$ See \cite[Theorem 4.5]{Tarizadeh}.

$\mathbf{(iii)}\Rightarrow\mathbf{(i)}:$ Clear. \end{proof}

Returning to P.Z. rings, the following proposition states that if the ring itself is Noetherian (not just the spectrum), then it suffices to check only minimal primes in Theorem \ref{Theorem II}$\mathbf{(ii)}$, i.e. a Noetherian ring is P.Z. iff every minimal prime is an isolated point in the Zariski topology.
This is a dual result to \cite[Theorem 1]{Pakala-Shores} which may be phrased as: a Noetherian ring is C.P. iff every maximal ideal is an isolated point in the flat topology. 

\begin{theorem}\label{Theorem III P.Z. Noeth.} Let $R$ be a Noetherian ring.
Then $R$ is P.Z. if and only if for every minimal prime $\mathfrak{p}$ of $R$, there exists $f\in R$ such that $D(f)=\{\mathfrak{p}\}$. 
\end{theorem}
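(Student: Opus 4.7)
The plan is to dispatch each direction separately: the ``only if'' direction is essentially free from Theorem \ref{Theorem II}, and for the ``if'' direction I will reduce to showing $\Spec(R)$ is finite, so that Theorem \ref{Thm c.p.+P.Z.=finite primes} gives the conclusion.

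For ``$\Rightarrow$'', if $R$ is P.Z., Theorem \ref{Theorem II}$\mathbf{(ii)}$ applied to a minimal prime $\mathfrak{p}$ furnishes $f \in R$ with $\Lambda(\mathfrak{p}) = D(f)$; since $\Lambda(\mathfrak{p}) = \{\mathfrak{p}\}$ automatically whenever $\mathfrak{p}$ is minimal, this $f$ witnesses $D(f) = \{\mathfrak{p}\}$.

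For ``$\Leftarrow$'', enumerate the finitely many minimal primes $\mathfrak{p}_1, \ldots, \mathfrak{p}_n$ of the Noetherian ring $R$ together with elements $f_i \in R$ satisfying $D(f_i) = \{\mathfrak{p}_i\}$. Since $\Spec(R) = \bigcup_i V(\mathfrak{p}_i) = \bigcup_i \Spec(R/\mathfrak{p}_i)$, it suffices to show each $\Spec(R/\mathfrak{p}_i)$ is finite. Fix $i$ and set $A := R/\mathfrak{p}_i$, with fraction field $K$ and $a$ the image of $f_i$. First I would observe $\Spec(R_{f_i}) = D(f_i) = \{\mathfrak{p}_i\}$, so $R_{f_i}$ is a Noetherian local ring of dimension $0$, i.e.\ Artinian local; its residue field is $R_{f_i}/\mathfrak{p}_i R_{f_i} \cong A_a$, which is therefore a field containing the domain $A$ and contained in $K$, forcing $A_a = K$. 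Equivalently, every nonzero prime of $A$ contains $a$, and Krull's principal ideal theorem then supplies only finitely many height-$1$ primes of $A$ (the minimal primes of $(a)$), through which every nonzero prime factors.

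The main obstacle is then to conclude $\dim A \leq 1$; once this is in hand, $A$ is a Noetherian domain of dimension $\leq 1$ with finitely many height-$1$ (hence maximal) primes, so $\Spec(A) = \{(0)\} \cup \Max(A)$ is finite. I would deduce $\dim A \leq 1$ by localizing at any maximal ideal $\mathfrak{m}$ of $A$ (the hypothesis persists: $(A_\mathfrak{m})_{a/1} = K$, so $A_\mathfrak{m}$ again has only finitely many height-$1$ primes) and invoking the classical consequence of prime avoidance that a Noetherian local domain of dimension $\geq 2$ has infinitely many height-$1$ primes. Indeed, were $\dim A_\mathfrak{m} \geq 2$, the maximal ideal $\mathfrak{m} A_\mathfrak{m}$ (of height $\geq 2$) could not be contained in the finite union of the height-$1$ primes of $A_\mathfrak{m}$, yielding some $b$ outside each; but any minimal prime of $(b)$ would then be a height-$1$ prime (by Krull) missing from the finite list, a contradiction. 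Thus $\dim A \leq 1$, whence $\Spec(R)$ is finite, and Theorem \ref{Thm c.p.+P.Z.=finite primes} delivers $R$ as P.Z.
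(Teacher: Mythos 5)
Your proof is correct and follows essentially the same route as the paper's: you reduce to showing $\Spec(R)$ is finite by proving each $V(\mathfrak{p}_i)$ is finite, using that $D(f_i)=\{\mathfrak{p}_i\}$ forces every nonzero prime of $R/\mathfrak{p}_i$ to contain the image of $f_i$, and then invoking Krull's principal ideal theorem to bound the height-one primes. The only difference is that you spell out in full the classical fact -- left implicit in the paper -- that a Noetherian domain with infinitely many primes must have infinitely many height-one primes.
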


\begin{proof} The direction $\Rightarrow$ follows from Theorem \ref{Theorem II}$\mathbf{(ii)}$.
Conversely, it suffices to show that $V(\mathfrak{p})$ is finite for all $\mathfrak{p}\in\Min(R)$ (so that $\Spec(R)$ is finite).
Pick $\mathfrak{p} \in \Min(R)$.
If $V(\mathfrak{p})$ is infinite then it contains infinitely many height one prime ideals, all of which contain $f$.
Then the image of $f$ in $R/\mathfrak{p}$ generates a height one ideal with infinitely many minimal primes, which is impossible since $R/\mathfrak{p}$ is Noetherian. \end{proof}

We next rephrase the P.Z. property for rings of dimension 1: 

\begin{theorem} \label{thm:1dimPZ}
Let $R$ be a 1-dimensional ring. Then $R$ is P.Z. iff

\begin{enumerate}[label=(\arabic*)]
    \item $R$ is semilocal,
    \item For all $\mathfrak{p} \in \Min(R)$, $\displaystyle \bigcap_{\mathfrak{q} \in \Min(R) \setminus \{\mathfrak{p}\}} \mathfrak{q} \ne \mathfrak{N}$, and
    \item For all $\mathfrak{m} \in \Max(R)$, $\displaystyle \mathfrak{m} + \bigcap_{\mathfrak{q} \in \Min(R) \setminus \Lambda(\mathfrak{m})} \mathfrak{q} = R$. 
\end{enumerate}
\end{theorem}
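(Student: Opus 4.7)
The plan is to use Theorem \ref{Theorem II}(ii) throughout as the working criterion: $R$ is P.Z.\ iff for every prime $\mathfrak{p}$ there exists $f \in R$ with $\Lambda(\mathfrak{p}) = D(f)$. Since $\dim R = 1$, $\Spec(R) = \Min(R) \cup \Max(R)$ (with possible overlap), so any such $f$ is prescribed by its behaviour on minimal primes and on the (finitely many, once (1) is in hand) maximal ideals separately.

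For the forward direction, (1) is just Corollary \ref{Corollary VI max finite}. For (2), taking $\mathfrak{p} \in \Min(R)$ makes $\Lambda(\mathfrak{p}) = \{\mathfrak{p}\}$, and the $f$ from Theorem \ref{Theorem II}(ii) then lies in every other minimal prime but outside $\mathfrak{p} \supseteq \mathfrak{N}$, forcing $\bigcap_{\mathfrak{q} \in \Min(R) \setminus \{\mathfrak{p}\}} \mathfrak{q} \ne \mathfrak{N}$. For (3), taking $\mathfrak{m} \in \Max(R)$ and applying (ii), the resulting $f$ lies in every minimal prime outside $\Lambda(\mathfrak{m})$ yet not in the maximal ideal $\mathfrak{m}$, so maximality of $\mathfrak{m}$ forces $\mathfrak{m} + \bigcap_{\mathfrak{q} \in \Min(R) \setminus \Lambda(\mathfrak{m})} \mathfrak{q} = R$.

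For the reverse direction, assume (1), (2), (3). I construct $f$ case-by-case. When $\mathfrak{p}$ is minimal, (2) supplies $g \in \bigcap_{\mathfrak{q} \in \Min(R) \setminus \{\mathfrak{p}\}} \mathfrak{q}$ with $g \notin \mathfrak{p}$, and I then seek $h \notin \mathfrak{p}$ lying in every maximal ideal other than possibly $\mathfrak{p}$ itself. If $\mathfrak{p}$ is also maximal, such $h$ exists by CRT on the finite set $\Max(R)$. If $\mathfrak{p}$ is not maximal, the Jacobson radical is a \emph{finite} intersection of maximal ideals by (1); since $\mathfrak{p}$ is prime and not maximal, $\mathfrak{p}$ cannot contain this intersection in dimension $1$, so $h$ may be chosen in the Jacobson radical outside $\mathfrak{p}$. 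In either subcase, $f := gh$ satisfies $D(f) = \{\mathfrak{p}\} = \Lambda(\mathfrak{p})$. When $\mathfrak{p}$ is maximal but not minimal, condition (3) furnishes $b = 1 - a$ with $a \in \mathfrak{p}$ and $b$ in every minimal prime not contained in $\mathfrak{p}$; combined with an $h$ obtained by CRT to lie in every other maximal ideal but outside $\mathfrak{p}$, the product $f := bh$ works. The minimal primes $\mathfrak{q} \subseteq \mathfrak{p}$ are handled for free: $f \notin \mathfrak{p}$ together with $\mathfrak{q} \subseteq \mathfrak{p}$ gives $f \notin \mathfrak{q}$.

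The main obstacle is the division of labour in the reverse direction: (2) and (3) directly control only minimal primes, so CRT on $\Max(R)$ (enabled by (1)) is essential to glue minimal-prime control with maximal-prime control into a single $f$. The trickiest subcase is $\mathfrak{p}$ minimal-but-not-maximal, where the key topological input is that in a $1$-dimensional ring a finite intersection of maximal ideals cannot lie inside a non-maximal prime.
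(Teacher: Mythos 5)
Your proposal is correct and follows essentially the same route as the paper: both directions reduce to Theorem \ref{Theorem II}$\mathbf{(ii)}$, and in the converse the desired $f$ with $\Lambda(\mathfrak{p})=D(f)$ is built as a product of one element controlling the minimal primes (supplied by (2) or (3)) and one controlling the finitely many maximal ideals (supplied by (1)). The only differences are cosmetic -- you split cases by minimal vs.\ maximal-not-minimal rather than maximal vs.\ non-maximal, and you place the second factor in the full Jacobson radical rather than only in the maximal ideals over $\mathfrak{p}$ -- and your observation that a finite intersection of maximal ideals cannot lie in a non-maximal prime in fact needs no dimension hypothesis.
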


\begin{proof} Any P.Z. ring satisfies conditions (2) and (3), as well as (1) by Corollary \ref{Corollary VI max finite}.
Conversely, we show that conditions (1)-(3) are equivalent to Theorem \ref{Theorem II}$\mathbf{(ii)}$.
If $\mathfrak{m} \in \Max(R)$, then by (3) we may choose $\displaystyle f_\mathfrak{m} \in \Big( \bigcap_{\mathfrak{q} \in \Min(R) \setminus \Lambda(\mathfrak{m})} \mathfrak{q} \Big) \setminus \mathfrak{m}$.
Then choosing $\displaystyle g_\mathfrak{m} \in \Big( \bigcap_{\mathfrak{m}' \in \Max(R) \setminus \{\mathfrak{m}\}} \mathfrak{m}' \Big) \setminus \mathfrak{m}$ (which is possible since the intersection is finite) gives $\Lambda(\mathfrak{m}) = D(f_\mathfrak{m} g_\mathfrak{m})$.
If now $\mathfrak{p}$ is a non-maximal (hence minimal) prime, then (2) implies that there exists $\displaystyle x_{\mathfrak{p}} \in \Big( \bigcap_{\mathfrak{q} \in \Min(R) \setminus \{\mathfrak{p}\}} \mathfrak{q} \Big) \setminus \mathfrak{p}$.
As before, if $\displaystyle y_\mathfrak{p} \in \Big( \bigcap_{\mathfrak{m} \in V(\mathfrak{p}) \cap \Max(R)} \mathfrak{m} \Big) \setminus \mathfrak{p}$, then $\Lambda(\mathfrak{p}) = \{\mathfrak{p}\} = D(x_\mathfrak{p} y_\mathfrak{p})$. \end{proof}

\begin{remark}
Any ring with finitely many minimal primes satisfies conditions (2) and (3) in Theorem \ref{thm:1dimPZ}. 
\end{remark}

\begin{corollary} \label{cor:1dimPZ}
Let $R$ be a 1-dimensional reduced local ring.
Then $R$ is P.Z. iff $\displaystyle \bigcap_{\mathfrak{q} \in \Min(R) \setminus\{\mathfrak{p}\}} \mathfrak{q} \ne 0$ for all $\mathfrak{p} \in \Min(R)$. 
\end{corollary}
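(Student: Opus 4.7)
The plan is to derive this from Theorem \ref{thm:1dimPZ} by showing that, under the hypotheses that $R$ is local and reduced, two of the three conditions in that theorem become automatic, while the third reduces to exactly the inequality in the statement.

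First, condition (1) of Theorem \ref{thm:1dimPZ} (semilocality) holds trivially, since any local ring has a unique maximal ideal. Second, since $R$ is reduced, $\mathfrak{N} = 0$, so condition (2) of Theorem \ref{thm:1dimPZ} becomes precisely
\[
\bigcap_{\mathfrak{q} \in \Min(R) \setminus \{\mathfrak{p}\}} \mathfrak{q} \ne 0 \quad \text{for all } \mathfrak{p} \in \Min(R),
\]
which is the condition in the corollary.

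The remaining step is to argue that condition (3) of Theorem \ref{thm:1dimPZ} is vacuous in the local case. Since $R$ has a unique maximal ideal $\mathfrak{m}$, every prime (and in particular every minimal prime) is contained in $\mathfrak{m}$, so $\Min(R) \subseteq \Lambda(\mathfrak{m})$. Thus $\Min(R) \setminus \Lambda(\mathfrak{m}) = \emptyset$, and under the standard convention that the empty intersection of ideals equals $R$, condition (3) reduces to $\mathfrak{m} + R = R$, which is trivial.

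Combining these reductions, Theorem \ref{thm:1dimPZ} says that $R$ is P.Z. iff the stated inequality holds. There is no real obstacle here, as the argument is essentially bookkeeping; the only mild subtlety is the convention for empty intersections, which I would state explicitly to avoid ambiguity.
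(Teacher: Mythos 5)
Your proposal is correct and is exactly the argument the paper intends: the paper's proof is simply ``This follows immediately from Theorem \ref{thm:1dimPZ},'' and your verification that condition (1) is automatic for a local ring, condition (3) is vacuous since $\Min(R)\setminus\Lambda(\mathfrak{m})=\emptyset$, and condition (2) reduces to the stated inequality because $\mathfrak{N}=0$, is precisely the bookkeeping being elided.
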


\begin{proof} This follows immediately from Theorem \ref{thm:1dimPZ}. \end{proof}

Finally, we investigate Goldman ideals in P.Z. rings.
The following result improves on \cite[Theorems 18, 24]{Kaplansky} and \cite[\S I, Prop. 1]{Picavet} with the addition of conditions $\mathbf{(vi)}$ and $\mathbf{(vii)}$.

\begin{theorem}\label{Theorem Goldman} The following are equivalent for an integral domain $R$ with field of fractions $K$: 
\begin{enumerate}
\item $K$ is a simple extension of $R$. 
\item $K$ is a finitely generated algebra over $R$. 
\item There exists a maximal ideal $\mathfrak{m}$ of $R[X]$ such that $\mathfrak{m}\cap R=0$. 
\item $\{0\}$ is a locally closed subset of $\Spec(R)$. 
\item The zero ideal of $R$ is an isolated point of $\Spec(R)$. 
\item If $\{ \mathfrak{p}_i \}$ is a family of non-zero prime ideals of $R$, then $\bigcap\limits_{i}\mathfrak{p}_{i}\neq0$. 
\item There is some non-zero $f\in R$ such that for each non-zero $g\in R$ then $f\in\sqrt{(g)}$. 
\end{enumerate}
\end{theorem}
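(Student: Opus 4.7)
My plan is to split the seven conditions into two clusters — the algebraic $\mathbf{(i)}$--$\mathbf{(iii)}$ and the topological/set-theoretic $\mathbf{(iv)}$--$\mathbf{(vii)}$ — and then bridge them via the elementary observation that for nonzero $f \in R$, one has $R_f = K$ iff $D(f) = \Spec R_f = \{0\}$ as subsets of $\Spec R$. The topological cluster is easy to dispatch. Since $0$ is the generic point of $\Spec R$, its closure is all of $\Spec R$, so $\{0\}$ is locally closed iff it is open, iff $\{0\} = D(f)$ for some nonzero $f$; this handles $\mathbf{(iv)} \Leftrightarrow \mathbf{(v)}$. Taking complements, $V(f) = \Spec R \setminus \{0\}$ says exactly that $f$ is a nonzero element of every nonzero prime, giving $\mathbf{(v)} \Leftrightarrow \mathbf{(vi)}$. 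And $\mathbf{(vi)} \Leftrightarrow \mathbf{(vii)}$ holds because $\sqrt{(g)}$ is the intersection of the minimal primes over any nonzero $g$, and these are all nonzero.

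For the algebraic cluster, $\mathbf{(i)} \Rightarrow \mathbf{(ii)}$ is trivial, and $\mathbf{(ii)} \Rightarrow \mathbf{(i)}$ is the standard common-denominator argument: if $K = R[a_1/b_1, \ldots, a_n/b_n]$, then $K \subseteq R[1/(b_1 \cdots b_n)] \subseteq K$, so $K$ is simple over $R$. This also forges the bridge to $\mathbf{(v)}$: namely, $\mathbf{(i)}$ says $R_f = K$ for some $f$, which is equivalent to $D(f) = \{0\}$. For $\mathbf{(i)} \Rightarrow \mathbf{(iii)}$, given $K = R[1/f]$ I would take $\mathfrak{m} := (fX - 1) \subset R[X]$; the isomorphism $R[X]/(fX-1) \cong R[1/f] = K$ shows $\mathfrak{m}$ is maximal, and a direct check shows that no nonzero constant lies in $(fX-1)$, so $\mathfrak{m} \cap R = 0$.

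The main obstacle is $\mathbf{(iii)} \Rightarrow \mathbf{(ii)}$, which requires a Zariski's-lemma-style argument. Given $\mathfrak{m} \subset R[X]$ maximal with $\mathfrak{m} \cap R = 0$, the quotient $L := R[X]/\mathfrak{m}$ is a field extension of $R$, hence contains $K$. Letting $\alpha$ denote the image of $X$, one has $L = R[\alpha] = K[\alpha]$, and $\alpha$ must be algebraic over $K$, since otherwise $K[\alpha] \cong K[X]$ would not be a field. Let $\mu \in K[X]$ be the minimal polynomial of $\alpha$, and choose $b \in R \setminus \{0\}$ clearing all denominators of $\mu$; then $b\mu \in R[X]$ exhibits $\alpha$ as integral over $R[1/b]$, so $L = R[\alpha]$ is a finite integral extension of $R[1/b]$. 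Since any integral extension $A \subseteq B$ with $B$ a field forces $A$ to be a field, $R[1/b]$ is a field sandwiched between $R$ and $K$, hence coincides with $K$. This gives $\mathbf{(i)}$, and thus $\mathbf{(ii)}$, closing the circle.
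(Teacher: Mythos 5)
Your proposal is correct, and for the genuinely new content of the theorem --- conditions $\mathbf{(vi)}$ and $\mathbf{(vii)}$ --- your arguments coincide in substance with the paper's: both hinge on producing a nonzero $f$ lying in every nonzero prime, so that $\{0\}=D(f)$, $f\in\sqrt{(g)}$ for all nonzero $g$, and $K=R[1/f]$. The difference is organizational and in scope. The paper outsources the classical equivalences, citing Kaplansky for $\mathbf{(i)}\Leftrightarrow\mathbf{(ii)}\Leftrightarrow\mathbf{(iii)}$ and Picavet for $\mathbf{(iii)}\Leftrightarrow\mathbf{(iv)}\Leftrightarrow\mathbf{(v)}$, and then only supplies the cycle $\mathbf{(v)}\Rightarrow\mathbf{(vi)}\Rightarrow\mathbf{(vii)}\Rightarrow\mathbf{(i)}$ (the last step being the explicit computation $f^n=rg$, hence $h/g=rh/f^n$). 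You instead prove everything from scratch: the two-cluster decomposition bridged by the observation that $R_f=K$ iff $D(f)=\{0\}$, the generic-point argument for $\mathbf{(iv)}\Leftrightarrow\mathbf{(v)}$, and in particular a full Artin--Tate/Zariski's-lemma argument for $\mathbf{(iii)}\Rightarrow\mathbf{(ii)}$ (clearing denominators of the minimal polynomial of $\alpha$ to get integrality over $R[1/b]$, then concluding $R[1/b]$ is a field, hence equals $K$). Your version is longer but self-contained and makes the logical dependencies transparent; the paper's is shorter at the cost of relying on the literature. All the individual steps you give check out, including the degree argument showing $(fX-1)\cap R=0$ and the fact that a field between $R$ and $K$ containing $R$ must equal $K$; the only cosmetic slip is that $b\mu$ itself is not monic --- what exhibits integrality is $\mu$ viewed as a monic polynomial with coefficients in $R[1/b]$ --- but this is exactly what your argument uses.
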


\begin{proof} For $\mathbf{(i)}\Leftrightarrow\mathbf{(ii)}\Leftrightarrow\mathbf{(iii)}$ see \cite[Theorems 18, 24]{Kaplansky}. For $\mathbf{(iii)}\Leftrightarrow\mathbf{(iv)}
\Leftrightarrow\mathbf{(v)}$ see \cite[\S I, Prop. 1]{Picavet}.

$\mathbf{(v)}\Rightarrow\mathbf{(vi)}:$ It suffices to show that the intersection of all non-zero primes is non-zero.
By hypothesis, there is some non-zero $f\in R$ such that $\{0\}=D(f)$.
Thus $f$ belongs to the above intersection.

$\mathbf{(vi)}\Rightarrow\mathbf{(vii)}:$ Choose some non-zero $f$ from the intersection of all non-zero primes.
Then $f\in\sqrt{(g)}$ for all non-zero $g$.

$\mathbf{(vii)}\Rightarrow\mathbf{(i)}:$ If $h/g\in K$ then $g$ is non-zero.
Thus there exists a natural number $n\geqslant1$ such that $f^{n}=rg$ for some non-zero $r\in R$.
So $h/g=rh/f^{n}$, which yields $K=R[1/f]$. \end{proof}

An integral domain which satisfies one of the equivalent conditions of Theorem \ref{Theorem Goldman} is called a Goldman domain.
A prime ideal $\mathfrak{p}$ of a ring $R$ is called a Goldman ideal (or $G$-ideal) of $R$ if $R/\mathfrak{p}$ is a Goldman domain.
The set of Goldman ideals of a ring $R$ is denoted by $\Gold(R)$.
For more information on this set see \cite{Picavet}. 

\begin{corollary} If a ring $R$ is a P.Z. ring, then $\Gold(R)=\Spec(R)$. 
\end{corollary}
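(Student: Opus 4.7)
The plan is to reduce to the case of a P.Z. integral domain and then invoke condition $\mathbf{(v)}$ of Theorem \ref{Theorem Goldman} directly. To show $\mathfrak{p} \in \Gold(R)$ for an arbitrary $\mathfrak{p} \in \Spec(R)$, I need $R/\mathfrak{p}$ to be a Goldman domain. By Proposition \ref{prop:PZconstructions}(2), quotients of P.Z. rings are P.Z., so $R/\mathfrak{p}$ is a P.Z. integral domain. Therefore, it suffices to prove the corollary in the special case when $R$ itself is a P.Z. domain.

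Now I would apply Theorem \ref{Theorem II}$\mathbf{(ii)}$ to the prime ideal $(0)$ of the domain $R$: there exists $f \in R$ such that $\Lambda((0)) = D(f)$. Since $R$ is a domain, the only prime contained in $(0)$ is $(0)$ itself, so $\Lambda((0)) = \{(0)\}$. Hence $\{(0)\} = D(f)$, which means the zero ideal is an isolated point of $\Spec(R)$. By Theorem \ref{Theorem Goldman}$\mathbf{(v)}$, $R$ is a Goldman domain, completing the proof.

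There is no real obstacle here: the statement is essentially a two-line consequence of the key characterization that P.Z. means every $\Lambda(\mathfrak{p})$ is a basic Zariski open, combined with the fact that P.Z. passes to quotients. The only thing one has to notice is that applying the characterization to the prime $(0)$ in a domain collapses $\Lambda((0))$ to a single point, which is precisely the Goldman condition. The remark after Theorem \ref{Theorem Goldman} that $\Gold(R) = \Spec(R)$ would not give anything new in the Noetherian case (since then all maximal ideals already lie in $\Gold(R)$ and every prime is an intersection of $G$-ideals), but for arbitrary P.Z. rings it is a substantive statement because P.Z. rings need not be Noetherian.
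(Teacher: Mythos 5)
Your proof is correct, and it enters the circle of equivalences at a different point than the paper does. The paper works entirely inside $R$: given $\mathfrak{p}$ and any family $\{\mathfrak{p}_i\}$ of primes in $V(\mathfrak{p})\setminus\{\mathfrak{p}\}$, the P.Z. property forces $\bigcap_i \mathfrak{p}_i \ne \mathfrak{p}$ (no $\mathfrak{p}_i$ can lie inside $\mathfrak{p}$), which is precisely Theorem \ref{Theorem Goldman}$\mathbf{(vi)}$ for $R/\mathfrak{p}$; no appeal to quotient-stability or to Theorem \ref{Theorem II} is needed. You instead pass explicitly to $R/\mathfrak{p}$ via Proposition \ref{prop:PZconstructions}(2), apply Theorem \ref{Theorem II}$\mathbf{(ii)}$ to the zero ideal to get $\{(0)\}=\Lambda((0))=D(f)$, and conclude via Goldman condition $\mathbf{(v)}$. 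Both arguments are two lines; the paper's is marginally more self-contained, while yours makes the topological content (the generic point of $\Spec(R/\mathfrak{p})$ is isolated) explicit. One correction to your closing aside: it is false that $\Gold(R)=\Spec(R)$ holds automatically for Noetherian rings. Every prime is an intersection of $G$-ideals in any ring, but that does not make every prime a $G$-ideal; e.g. $\Gold(\mathbb{Z})=\Max(\mathbb{Z})\neq\Spec(\mathbb{Z})$, since $\mathbb{Q}$ is not a finitely generated $\mathbb{Z}$-algebra. (The corollary is indeed unsurprising for Noetherian P.Z. rings, but only because such rings have finite spectrum.) This slip is in your commentary, not in the proof itself.
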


\begin{proof} Let $\mathfrak{p}$ be a prime ideal of $R$.
If $(\mathfrak{p}_{i})$ is a subset of $V(\mathfrak{p})\setminus\{\mathfrak{p}\}$ then $\bigcap\limits_{i}\mathfrak{p}_{i} \neq \mathfrak{p}$.
Thus by Theorem \ref{Theorem Goldman}$\mathbf{(vi)}$, $\mathfrak{p}$ is a $G$-ideal. \end{proof}

\section{Examples}

\begin{example}
If $k$ is a field, then $k[x,y,z]/(xy,xz)$ is a Noetherian ring of dimension $1$, but is neither C.P. nor P.Z.: for more details see \cite[Example 5]{Chen}. 
\end{example}

\begin{example} \label{ex:infiniteProduct}
If $\{ R_i \}$ is an infinite family of rings, then $\prod_i R_i$ always has infinitely many minimal primes and maximal ideals.
By Remark \ref{rem:CPimpliesNoetherianSpectrum}(1) and Corollary \ref{Corollary VI max finite}, $\prod_i R_i$ is never C.P. nor P.Z.
Thus Propositions \ref{prop:CPconstructions}(3) and \ref{prop:PZconstructions}(3) are sharp. 
\end{example}

\begin{example}
In Theorems \ref{Theorem iv 2020}$\mathbf{(vi)}$ and \ref{Theorem II}$\mathbf{(vii)}$, the ``radical'' assumptions are necessary.
For example, if $R$ is a DVR with a uniformizer $p$, then $R$ is P.Z., but $\bigcap\limits_{k \geq 1} (p^k) = 0$.
Similarly, if $R$ is a Dedekind domain with torsion but nonzero class group, then $R$ is C.P., but for any nonprincipal ideal $I$, one has $I \subseteq \bigcup\limits_{a \in I}(a)$, but $I \not \subseteq (a)$ for any $a \in I$. 
\end{example}

\begin{example}
Neither C.P. nor P.Z. are local properties: if $R$ is an infinite product of fields (or more generally any non-Noetherian absolutely flat ring), then $R$ is neither C.P. nor P.Z., by Example \ref{ex:infiniteProduct}.
However, $R_\mathfrak{p}$ is a field for all $\mathfrak{p} \in \Spec(R)$.

Similarly, neither the C.P. nor P.Z. properties are preserved by adjoining variables.
For example if $k$ is a field then $k[x]$ is not P.Z., since $\Max(k[x])$ is infinite.
Also $k[x]$ is C.P. (being a PID), but by Remark \ref{rem:CPimpliesNoetherianSpectrum}(3), $k[x][y] = k[x,y]$ is not C.P. 
\end{example}

We have seen that among Noetherian rings, the C.P. or P.Z. rings have dimension $\le 1$.
However, in general there are rings of any finite dimension which are both C.P. and P.Z.
For example, a finite-dimensional valuation ring has finite spectrum (since the primes are totally ordered), hence is both C.P. and P.Z.
More generally, it is well known that for any finite poset $P$, there exists a ring $R$ such that $\Spec(R)$ is order-isomorphic to $P$ as a poset, see \cite{Lewis} or \cite{Hochster}. 

The following result guarantees existence of P.Z. rings with infinite spectra. 

\begin{proposition}\label{Prop Hochster} Let $R$ be a ring such that $\Spec(R)$ is a Noetherian space in the Zariski topology.
Then there exists a P.Z. ring $R'$ such that $\Spec(R')$ is in bijection with $\Spec(R)$ and this correspondence reverses the prime orders. 
\end{proposition}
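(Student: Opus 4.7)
The plan is to apply Hochster's realization theorem to $\Spec(R)$ equipped with its inverse (Hochster-dual) topology. The key preliminary observation is that for any ring, the flat topology on the spectrum coincides with the inverse topology: by definition, the basic open sets of the inverse topology are the complements of quasi-compact Zariski opens, i.e.\ sets of the form $V(I)$ for $I$ finitely generated, which is precisely the base for the flat topology given in the introduction.

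Under the hypothesis that $\Spec(R)$ is Zariski-Noetherian, it is a spectral space, and hence so is its Hochster dual $\Spec(R)^{\mathrm{inv}}$. By Hochster's theorem, there exists a ring $R'$ and a homeomorphism $\Phi:\Spec(R')\to\Spec(R)^{\mathrm{inv}}$, where $\Spec(R')$ carries its Zariski topology. Since the specialization order in $X^{\mathrm{inv}}$ is the reverse of that in $X$, tracing through the identification $\mathfrak{p}\in\overline{\{\mathfrak{q}\}}\Leftrightarrow\mathfrak{q}\subseteq\mathfrak{p}$ on each Zariski side shows that $\Phi$ yields an order-reversing bijection of prime ideals $\Spec(R')\leftrightarrow\Spec(R)$, as required.

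It remains to verify that $R'$ is P.Z. By Theorem \ref{Theorem II}$\mathbf{(vi)}$, it suffices to show that $\Spec(R')$ is Noetherian in its flat topology. Applying the preliminary observation to $R'$, the flat topology on $\Spec(R')$ equals its inverse topology, which via $\Phi$ corresponds to the inverse of $\Spec(R)^{\mathrm{inv}}$, i.e.\ the original Zariski topology on $\Spec(R)$ (using the involutivity $(X^{\mathrm{inv}})^{\mathrm{inv}}=X$ of Hochster duality). This is Noetherian by hypothesis, so $R'$ is P.Z.

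The main subtlety is the topological bookkeeping --- identifying the flat topology with the inverse topology, and tracking the reversal of specialization under $\Phi$ --- but once these are unpacked from the definitions, the argument reduces to a direct invocation of Hochster's realization theorem \cite{Hochster} together with the characterization of P.Z. rings in Theorem \ref{Theorem II}.
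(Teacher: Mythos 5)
Your proof is correct and follows essentially the same route as the paper: invoke Hochster's realization theorem to produce $R'$, identify the flat topology with the inverse (Hochster-dual) topology, conclude $R'$ is P.Z. via Theorem \ref{Theorem II}$\mathbf{(vi)}$, and read off the order reversal from specialization. The only cosmetic difference is that the paper cites the duality directly in the form ``$\Spec(R')$ with the flat topology is homeomorphic to $\Spec(R)$ with the Zariski topology,'' whereas you apply the realization theorem to $\Spec(R)^{\mathrm{inv}}$ and then use involutivity --- the same argument with the bookkeeping made explicit.
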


\begin{proof} There exists a ring $R'$ such that $\Spec(R')$ equipped with the flat topology is homeomorphic to $\Spec(R)$ equipped with the Zariski topology, see \cite[Theorem 6 and Proposition 8]{Hochster} or \cite[Theorem 3.20]{Tarizadeh}.
Thus $\Spec(R')$ is a Noetherian space in the flat topology, so by Theorem \ref{Theorem II}$\mathbf{(vi)}$, $R'$ is a P.Z. ring.
If the homeomorphism is denoted by $\ast:\Spec(R)\rightarrow\Spec(R')$, then for primes $\mathfrak{p}\subseteq\mathfrak{q}$ of $R$, one has $\mathfrak{q}\in V(\mathfrak{p})=\overline{\{\mathfrak{p}\}}$.
Thus $\mathfrak{q}^{\ast}\in\overline{\{\mathfrak{p}^{\ast}\}}=\Lambda(\mathfrak{p}^{\ast})$, so $\mathfrak{q}^{\ast} \subseteq \mathfrak{p}^{\ast}$. \end{proof}

\begin{example} Taking $R = \mathbb{Z}$ in Proposition \ref{Prop Hochster} yields a 1-dimensional local P.Z. ring $R'$ with infinite spectrum.
Since Corollary \ref{Corollary VI max finite} implies that a P.Z. ring with infinite spectrum has Krull dimension $> 0$, this is a minimal example of a P.Z. ring with infinite spectrum.
Quotienting by the nilradical of $R'$ then gives a P.Z. ring satisfying the conditions of Corollary \ref{cor:1dimPZ}.
\end{example}

\end{document}